 \title{Wasserstein Stability for Persistence Diagrams}
\newtheorem{theorem}{Theorem}[section]
\newtheorem{lemma}[theorem]{Lemma}
\newtheorem{corollary}[theorem]{Corollary}
\newtheorem{prop}[theorem]{Proposition}
\newtheorem{definition}[theorem]{Definition}
\newtheorem{remark}[theorem]{Remark}
\newcommand{\R}{\mathbb{R}} 
\newcommand{\Z}{\mathbb{Z}} 
\newcommand{\Dgm}{\mathrm{Dgm}} 
\newcommand{\Xp}{\mathcal{P}} 
\newcommand{\Xpi}{\mathcal{P}_0} 
\newcommand{\Ypi}{\mathcal{P}_1} 
\newcommand{\SC}{K} 
\newcommand{\simplex}{\sigma} 
\newcommand{\Gpt}{v} 
\newcommand{\birth}{\mathbf{b}} 
\newcommand{\death}{\mathbf{d}} 
\newcommand{\power}{p} 
\DeclareMathOperator{\vol}{vol}
\DeclareMathOperator{\supp}{supp}
\newcommand{\PHT}{\operatorname{PHT}}
\newcommand{\CS}{\text{CS}}
\newcommand{\Was}[1]{W_{#1}} 
\newcommand{\Mch}{\mathbf{M}} 
\newcommand{\cost}{\text{cost}} 
\newcommand{\Hg}{\mathrm{H}} 
\newcommand{\rk}{\mathbf{rk}} 
\newcommand{\Mod}[1]{\mathcal{#1}} 
\newcommand{\VR}{\mathcal{R}} 
\newcommand{\Diag}{\Delta} 
\newcommand{\D}{\mathcal{D}} 
\newcommand{\prs}[1]{#1}
\newcommand{\prsx}[1]{#1}
\newcommand{\kts}[1]{#1}
\newcommand{\kt}[1]{#1}
\newcommand{\pr}[1]{#1}
\begin{document}

\author{Primoz Skraba}
\address{School of Mathematical Sciences, Queen Mary University of London,
	London, UK}
\email{p.skraba@qmul.ac.uk}

\author{Katharine Turner}
\address{Mathematical Sciences Institute, Australian National University,
	Canberra, Australia}
\email{katharine.turner@anu.edu.au}

\begin{abstract}
	The stability of persistence diagrams is among the most important results in applied and computational topology. Most results in the literature phrase stability in terms of the bottleneck distance between persistence diagrams constructed via filtrations by sub-level sets of functions and the $\infty$-norm of perturbations of the input functions. This has two main implications: it makes the space of persistence diagrams rather pathological and it is often provides very pessimistic bounds with respect to outliers. In this paper, we provide new stability results with respect to the $p$-Wasserstein distance between persistence diagrams. This includes an elementary proof for the setting of functions on sufficiently finite spaces (e.g. finite regular CW-complexes) in terms of the $p$-norm of the perturbations, and applying this result to a wide range of applications in topological data analysis (TDA) including topological summaries, persistent homology transforms of shapes and Vietoris-Rips filtrations.
\end{abstract}
\maketitle
\tableofcontents

\section{Introduction}\label{sec:intro}
Persistent homology has been the subject of extensive study in applied topology. Roughly speaking, it is a homology theory for filtrations or filtered spaces. A landmark result is that persistent homology, and more importantly persistence diagrams, 
are stable with respect to perturbations of the input filtration.  The classical result states:
\begin{theorem} \kts{Let $f,g : X \rightarrow \R$ such that all the homology groups of the sublevel sets  of $f$ and $g$ are finitely generated. Then the persistence diagrams  $\Dgm(f)$ and $\Dgm(g)$ for the persistent homology of their sublevel set filtrations satisfy}
	$$d_B(\Dgm(f),\Dgm(g)) \leq ||f -g||_\infty$$
	where $d_B(\cdot) $ represents the bottleneck distance.
\end{theorem}
A version of this result was originally proved for continuous tame functions over a triangulable space in \cite{CohEdeHar2007} and has since been generalized to algebraic~\cite{bauer2014induced} and categorical settings~\cite{bubenik2015metrics}, with recent work strongly aimed at multiparameter and more general settings, particularly where classical notions of persistence diagrams do not exist.
Here we study the $p$-Wasserstein stability of persistence diagrams for $1\leq p \leq \infty$. This has been far less studied,  with existing results almost exclusively in terms of the classical results relating \emph{interleaving distances} between filtrations and the $\infty$-Wasserstein distance,  i.e.  bottleneck distance. Upper bounds on the $p$-Wasserstein distances are \prs{based  primarily on \cite{cohen2010lipschitz}  and  rely} on bottleneck stability resulting in pessimistic bounds. Furthermore, there is often a requirement for $p$ to be sufficiently large for these stability results to hold.
However $p$-Wasserstein distances for small values of $p$ (i.e. $p=1,2$ rather than $p=\infty$) are important with the $2$-Wasserstein distance on persistence diagrams often \prs{being} much more effective than bottleneck distance within applications -- e.g. \cite{7789621, vidal2019progressive,nauleau2022topological,chung2024altered}.  \prsx{In addition to directly using Wasserstein distances between diagrams for data analysis, it has also used to prove the stability of linear representations of persistent homology.} The stability results are usually stated as upper bounds in terms of the $1$-Wasserstein distance, but pre-existing stability results for bottleneck and $p$-Wasserstein distances cannot be applied for this important case of $p=1$. 

\emph{Interleavings} are a key tool in existing stability results as they allow us to relate the bottleneck distance between persistence diagrams to the interleaving distance between persistence modules.
One of the main difficulties in establishing a $p$-Wasserstein bound is that interleavings between persistence modules are not sufficient.  
Here we take a fundamentally different approach to proving $p$-Wasserstein stability which, at its core, focuses on a cellular $p$-Wasserstein stability theorem. The proof exploits the local correspondences between coordinates of the points in the persistence diagram with critical cells in a filtration over a cellular complex. This local correspondence \prs{was first used for computing vineyards \cite{cohen2006vines} and more recently for optimization over persistence diagrams \cite{gameiro2016continuation,poulenard2018topological,chen2019topological,leygonie2019framework}}. \prs{A similar technique to the one used in this paper was used in \cite{cohen2006vines} to prove a stability result for the bottleneck distance, but which we use to prove a stability result for the $p$-Wasserstein distance.} This cellular $p$-Wasserstein stability theorem then can applied to a variety of settings to prove a range of stability theorems. 
%

%



%
In summary, the main contributions of this paper are:
\begin{enumerate}
	\item A cellular $p$-Wasserstein stability theorem for finite complexes.
	\item The application the above theorem to produce stability theorems for a number of applications, including grey-scale images, persistent homology transforms, and Vietoris-Rips filtrations. As the 2 -Wasserstein distance is widely used in applications, this addresses a significant gap in the applied topology literature.
	\item Discussion of the implications of this $p$-Wasserstein stability for the stability of other representations of persistent homology.
\end{enumerate}

\section{Preliminaries}\label{sec:preliminaries}
Within this paper we will restrict ourselves to a restricted class of functions over a finite CW-complex $\SC$. This finite setting provides a clear illustration of the ideas and is usually sufficient in applications.
\prs{In the analysis of specific applications, we may assume extra structure} e.g. cubical complexes for images (Section~\ref{sec:images}) and the simplicial structure of Vietoris-Rips complexes (Section~\ref{sec:rips}). 

\begin{definition}\label{def:pmodule}
	A persistence module $\Mod{F}$ is a collection of vector spaces $\{F_{\alpha}\}_{\alpha\in\mathbb{R}}$ along with transition maps  $\psi_\alpha^\beta: F_\alpha \rightarrow F_\beta$ for all $\alpha\leq \beta$ such that $\psi_\alpha^\alpha$ is the identity for all $\alpha$ and $\psi_\alpha^\beta\circ \psi_\beta^\gamma=\psi_\alpha^\gamma$ whenever $\alpha<\beta<\gamma$.
	If $F_\alpha$ is finite dimensional for all $\alpha$, then we say $\Mod{F}$ is pointwise finite dimensional (or p.f.d.).
\end{definition}

The building blocks of persistence modules are interval modules.

\begin{definition}
	Let $A \subset \R$ be an interval, and fix a field $\mathbb{F}$. The \emph{interval module} with support $A$, is the persistence modules such that $F_\alpha=\mathbb{F}$ for $\alpha\in A$ and $F_x=0$ for $\alpha\notin A$, and such that the transition maps $\phi_\alpha^\beta=\text{id}$ for $\alpha<\beta$ both in $A$ and the zero map otherwise. We denote this interval module by $\mathcal{I}_A$.
\end{definition}

\prs{When the} persistence module is pointwise finite dimensional (p.f.d.), which will be the case throughout this paper, we may apply the following theorem.
\begin{theorem}[\cite{Crawley_Boevey_2015} Theorem 1.1]\label{thm:intervaldecomposition}
	A p.f.d. persistence module admits an \emph{interval decomposition}. That is, the module is isomorphic to a direct sum of interval modules over some index set $S$:
	$$\bigoplus\limits_{x\in S}  \mathcal{I}_{A_x}$$
	which are unique up to reordering of $S$. 
\end{theorem}

Throughout this paper we will only ever see intervals with finite infimums so for the sake of clarity we will restrict to persistence modules of this form. The generalisation to consider $-\infty$ as the beginning of the intervals does not change any of the arguments significantly.
We can associate to each interval module $\mathcal{I}_A$ a point in 
$$ \overline{\mathbb{R}}^{2+}:=\{(a,b)\in \R\times \{\R\cup \infty\}\mid a\leq b\}$$ with the first coordinate $\birth(\mathcal{I}_A):=\inf(A)$ and second coordinate $\death(\mathcal{I}_A):=\sup(A)$. We refer to $\birth(\mathcal{I}_A)$ as the birth time and $\death(\mathcal{I}_A)$ as the death time. \prs{Taking the collection of pairs associated to the interval decomposition} of a p.f.d. persistence module $\Mod{F}$ we obtain a multiset of points in $\overline{\mathbb{R}}^{2+}$. We refer to this multiset as the \emph{persistence diagram}, and denote it $\Dgm(\Mod{F})$.

One of the most common ways persistence modules arise is via filtrations of finite CW-complexes. A \emph{filtration} of a topological space $K$ is a parameterised family of subsets $\{K_\alpha\subseteq K|\alpha\in \R\}$ such that $K_\alpha\subseteq K_\beta$ whenever $\alpha\leq \beta$. In this paper we will assume our filtrations arise as sublevel sets over a CW-complex: for
$f:\SC\rightarrow \mathbb{R}$, with
\begin{enumerate}
	\item\kts{ $f$ constant on the interior of each cell,}
	\item  $f$ is \emph{monotone}, i.e., \kts{ if} $\tau$ is a face of $\sigma$ then $f(\tau)\leq f(\sigma)$.
\end{enumerate}
The monotone assumption ensures that all sublevel sets are (closed) CW-complexes.  Hence, we have corresponding sublevel set filtration $\{K_\alpha\}_{\alpha\in \R}$ with
$$\SC_\alpha = \{\sigma | f(\sigma) \leq \alpha\}.$$
This is the most common setting in applications of persistence.
The \kts{ assumption that the functions are constant on each cell} excludes piecewise linear (PL) functions over simplicial complexes.
\kts{However, if $f$ is a piecewise linear function on a simplicial complex $S$, where the function on each cell is liner interpolation of the values on the vertices, then we can construct a monotone function $f' : S \to \R$, with persistent homology isomorphic to that of $f$. We just need to take each simplex value under $f'$ to be the maximum value of $f$ over its vertices.}

\kts{The monotonicity condition implies that the sublevel set filtration induces} a filtered cellular chain complex. By applying the homology functor over a field to \kts{that} filtered chain complex, we obtain the corresponding persistence module, denoted $\{\Hg_k(\SC_\alpha)\}_{\alpha\in\mathbb{R}}$ with the transition maps  $\psi_\alpha^\beta:\Hg_k(\SC_\alpha) \rightarrow \Hg_k(\SC_\beta)$, induced by the inclusions  $\SC_\alpha \hookrightarrow \SC_\beta$ for all $\alpha\leq \beta$.

As we restrict ourselves to piecewise constant filtration over a finite CW-complex, the resulting persistence module is pointwise finite dimensional (p.f.d.) and so we can apply Theorem \ref{thm:intervaldecomposition}. \kts{For a monotone function $f:K \to \R$ we will use $\Dgm_k(f)$ to denote the persistence module for the degree-$k$ persistence module for the sublevel set filtration of $f$, and use $\Dgm(f)$ to denote the union of the $\Dgm_k(f)$.}

Our main focus is the Wasserstein distance between diagrams. The Wasserstein distance is a form of optimal transport metric. We can consider all possible transportation plans for moving the points within one persistence diagram to a different one. The transportation plans between persistence diagrams are called matchings. Each of these transportation plans has a cost and the distance becomes the infimum of these costs. To define our potential transportation plans we need to introduce an abstract element which can be thought of as an empty interval. We call this abstract element the \emph{diagonal}  \prs{and} denote it by $\Diag$. 

\begin{definition}
	Let $\Dgm(\Mod{F})$ and $\Dgm(\Mod{G})$ be persistence diagrams represented by countable multisets in $\overline{\R}^{2+}$. A \emph{matching} $\Mch$ between $\Dgm(\Mod{F})$ and $\Dgm(\Mod{G})$ is a subset of $\{ \Dgm(\Mod{F}) \cup \Diag\} \times \{\Dgm(\Mod{G})\cup \Diag\}$ \kts{such that the number of elements of the form $(x, \cdot)$ is equal to the multiplicity of $x$ in $\Dgm(\Mod{F})$   and the number of elements of the form $(\cdot, y)$ is equal to the multiplicity of $y$ in  $\Dgm(\Mod{G})$ .} 
The abstract diagonal element $\Diag$ may appear in many pairs.
\end{definition}

As the points in $\Dgm(\Mod{F})$ and $\Dgm(\Mod{G})$  lie in $\R^2$ we can use the $l_p$ distance in the plane. With a slight abuse of notation, we can define an $l_p$ distance between a point in $\R^2$ to $\Diag$ by taking the perpendicular distance, that is
$$\left \|(a,b)-\Diag\right \|_p=\inf_{t\in \R}\left \|(a,b)-(t,t)\right\|_p=\left\|(a,b)-\left(\frac{a+b}{2}, \frac{a+b}{2}\right)\right\|_p=2^{\frac{1-p}{p}} |b-a|$$
and $\|(a,b)-\Diag\|_\infty=\frac{|b-a|}{2}$.
Furthermore we say for all $p$ that $\|\Diag-\Diag\|_p=0$, $\|(a,\infty)-(b, \infty)\|_p=|a-b|$ and $\|(a,\infty)-x\|_p=\infty$ for $x\in \R^2\cup \Delta$.



We define the \emph{p-cost} of a matching by taking the sum over all the pairs within the matching and taking to the appropriate power, that is
$$\cost_p(\Mch)= \left(\sum\limits_{(x,y)\in \Mch} ||x-y||_p^p\right)^{\frac{1}{p}}.$$

\begin{definition}\label{def:wasserstein}
	Given two persistence diagrams, $\Dgm_k(\Mod{F})$ and $\Dgm_k(\Mod{G})$, the $p$-Wasserstein distance is
	$$\Was{p}(\Dgm_k(\Mod{F}), \Dgm_k(\Mod{G})) = \inf\limits_{\Mch} \cost_p(\Mch)$$
	where $\Mch\subset \{\Dgm_k(\Mod{F})\cup \Delta\} \times \{\Dgm_k(\Mod{G})\cup \Delta\}$ is a matching.  The total $p$-Wasserstein distance is defined as
	$$\Was{p}(\Dgm(\Mod{F}), \Dgm(\Mod{G})) = \left(\sum_{k}\left(\Was{p}(\Dgm_k(\Mod{F}), \Dgm_k(\Mod{G})\right)^p\right)^{\frac{1}{p}}$$
\end{definition}

\begin{remark}\label{rem:wasserstein}
	It is worth noting that there is some discrepancy in th	e literature in the definition of the Wasserstein distance between persistence  diagrams.  More generally, given two diagrams, $\Dgm_k(\Mod{F})$ and $\Dgm_k(\Mod{G})$, we could define a $(p,q)$-Wasserstein distance as
	$$\Was{p,q}(\Dgm_k(\Mod{F}), \Dgm_k(\Mod{G})) = \inf\limits_{\Mch} \left(\sum\limits_{(x,y)\in \Mch} ||x-y||_q^p\right)^{\frac{1}{p}} $$
	where $\Mch\subset \{\Dgm_k(\Mod{F})\cup \Delta\} \times \{\Dgm_k(\Mod{G})\cup \Delta\}$ is a matching.  The total $(p,q)$-Wasserstein distance is defined as
	$$\Was{p,q}(\Dgm(\Mod{F}), \Dgm(\Mod{G})) = \left(\sum_{k}\left(\Was{p,q}(\Dgm_k(\Mod{F}), \Dgm_k(\Mod{G})\right)^p\right)^{\frac{1}{p}}$$
	For any fixed $p$, all the $\Was{p,q}$ as $q$ varies are bi-Lipschitz equivalent. However, we restrict ourselves to the case of  $p=q$. This will give the best bounds for the stability results in this paper. It also gives (Fr\'echet) means and medians of collections of persistence diagrams a nicer characterisation (see \cite{turner2014frechet, turner2020medians}) than other choices of $q$.
\end{remark}

Taking the limit $p\rightarrow\infty$ recovers the bottleneck distance
\begin{equation}
	\Was{\infty}(\Dgm(\Mod{F}), \Dgm(\Mod{G})) = \sup_k \inf_{\Mch_k} \sup_{(x,y)\in \Mch_k} ||x-y||_\infty
\end{equation}
where $\Mch \subset \{\Dgm_k(\Mod{F})\cup \Delta\} \times \{\Dgm_k(\Mod{G})\cup \Delta\}$ is a matching.
It is worth commenting on the relative strength of stability results for different $p$. 
 We first note the following lemma\footnote{This is a well-known result but we could not find an appropriate reference so the  proof in the appendix is included for completeness.} whose proof can be found in  Appendix~\ref{sec:powers}.
\begin{lemma}\label{lem:powers}
	For any $\power'\leq \power$, given persistence diagrams  $\Dgm(\Mod{F})$ and $\Dgm(\Mod{G})$, $$\Was{\power}(\Dgm(\Mod{F}),\Dgm(\Mod{G}))\leq \Was{\power'}(\Dgm(\Mod{F}),\Dgm(\Mod{G})).$$
\end{lemma}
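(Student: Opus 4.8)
The plan is to reduce the statement to the elementary fact that $\ell^r$-norms are non-increasing in $r$, applied in three places: to the ground metric $\|\cdot\|$ on $\R^2$, to the sum over points of a fixed diagram appearing in the definition of $\Was{\power}$, and to the sum over homological degrees. Beyond this scalar inequality the only inputs are trivial entrywise monotonicity and the elementary behaviour of infima.

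First I would isolate the scalar lemma: for any countable family $(c_i)_i$ of nonnegative reals and any $1\le \power'\le \power\le\infty$ one has $\big(\sum_i c_i^{\power}\big)^{1/\power}\le\big(\sum_i c_i^{\power'}\big)^{1/\power'}$ (with $\sup_i c_i$ on the left when $\power=\infty$). This is the standard normalization argument: if the right-hand side is $0$ or $\infty$ there is nothing to prove, and otherwise, after rescaling so that $\sum_i c_i^{\power'}=1$, each $c_i\le 1$ and hence $c_i^{\power}\le c_i^{\power'}$, so summing yields $\sum_i c_i^{\power}\le 1$; the case $\power=\infty$ follows by letting $\power\to\infty$, or directly from $\sup_i c_i\le\big(\sum_i c_i^{\power'}\big)^{1/\power'}$. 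I would also record the obvious fact that both sides are monotone in each entry $c_i$.

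Next I would fix a homological degree $\hdm$ and an arbitrary matching $\Mch:\Dgm_\hdm(\Mod{F})\to\Dgm_\hdm(\Mod{G})$, with diagonal points allowed as in Definition~\ref{def:wasserstein}. For a single point $\Dpt$, the scalar lemma applied to the two coordinates of $\Dpt-\Mch(\Dpt)\in\R^2$ gives $\|\Dpt-\Mch(\Dpt)\|_{\power}\le\|\Dpt-\Mch(\Dpt)\|_{\power'}$; the scalar lemma applied again, this time to the family $\big(\|\Dpt-\Mch(\Dpt)\|_{\power'}\big)_{\Dpt}$, together with entrywise monotonicity, gives
\begin{align*}
\Big(\sum_{\Dpt}\|\Dpt-\Mch(\Dpt)\|_{\power}^{\power}\Big)^{1/\power}
&\le\Big(\sum_{\Dpt}\|\Dpt-\Mch(\Dpt)\|_{\power'}^{\power}\Big)^{1/\power} \\
&\le\Big(\sum_{\Dpt}\|\Dpt-\Mch(\Dpt)\|_{\power'}^{\power'}\Big)^{1/\power'}.
\end{align*}
The set of matchings does not depend on $\power$, and for every $\Mch$ the right-hand side dominates $\Was{\power}(\Dgm_\hdm(\Mod{F}),\Dgm_\hdm(\Mod{G}))$, which is the infimum over all matchings of the left-hand side; hence taking the infimum over $\Mch$ of the right-hand side yields $\Was{\power}(\Dgm_\hdm(\Mod{F}),\Dgm_\hdm(\Mod{G}))\le\Was{\power'}(\Dgm_\hdm(\Mod{F}),\Dgm_\hdm(\Mod{G}))$. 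The bottleneck case $\power=\infty$ is identical with $\sup_{\Dpt}$ in place of the outer sum.

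Finally, writing $\dm_\hdm$ and $\dm'_\hdm$ for the degree-$\hdm$ distances $\Was{\power}$ and $\Was{\power'}$ respectively, the previous step gives $\dm_\hdm\le\dm'_\hdm$ for all $\hdm$, and one more application of the scalar lemma and entrywise monotonicity to $(\dm_\hdm)_\hdm$ produces $\big(\sum_\hdm \dm_\hdm^{\power}\big)^{1/\power}\le\big(\sum_\hdm (\dm'_\hdm)^{\power}\big)^{1/\power}\le\big(\sum_\hdm(\dm'_\hdm)^{\power'}\big)^{1/\power'}$, which is exactly $\Was{\power}(\Dgm(\Mod{F}),\Dgm(\Mod{G}))\le\Was{\power'}(\Dgm(\Mod{F}),\Dgm(\Mod{G}))$ (with $\sup_\hdm$ when $\power=\infty$). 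I do not expect any genuine obstacle here — the result is standard — and the only points deserving a sentence of care are that the sum over the points of a diagram may be countably infinite (so the scalar lemma must be stated for countable families valued in $[0,\infty]$) and that matchings may use the diagonal, neither of which affects the term-by-term estimates above.
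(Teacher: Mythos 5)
Your proposal is correct and uses essentially the same argument as the paper's proof in Appendix~\ref{sec:powers}: reduce to the scalar inequality $\bigl(\sum_i c_i^{\power}\bigr)^{1/\power}\le\bigl(\sum_i c_i^{\power'}\bigr)^{1/\power'}$ and then note that the infimum over matchings can only decrease. The only differences are organizational (you apply the scalar lemma three times — to coordinates, to points, to degrees — whereas the paper lumps the birth and death differences into one sequence and applies it once), and you additionally spell out the sum over homological degree and the $\power=\infty$ endpoint, which the paper's proof leaves implicit.
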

This lemma implies that when bounding the $\power$-Wasserstein distance from above, the smaller $\power$ is, the stronger the stability result. 


We will want to consider the space of persistence diagrams as a metric space with the $p$-Wasserstein metric for different values of $p\in [1,\infty]$. To do this we will want to restrict the set of allowable persistence diagrams in a manner similar to restricting the space of functions to those that are integrable.

\begin{definition}
	For persistence diagram $\Dgm(\Mod{F})$ let $\Dgm(\Mod{F})_{finite}$ be the subset of $\Dgm(\Mod{F})$  with finite coordinates. The space of persistence diagrams is the set of persistence diagrams   $\Dgm(\Mod{F})$ such that $\sum_{(b,d)\in \Dgm(\Mod{F})_{finite}}|b-d|<\infty$ and $\Dgm(\Mod{F})\backslash \Dgm(\Mod{F})_{finite}$ has finitely many elements. We denote this space $\D$. 
\end{definition}

Note that every persistence diagram constructed by sublevel set filtrations of a function over a finite CW complex will be contained in $\D$. The $p$-Wasserstein distance becomes an extended metric over $\D$.

\section{Existing stability results and their limitations}\label{sec:existing_limitations}
As already mentioned, almost all stability results involve the bottleneck distance between persistence diagrams. While a complete overview of these results is beyond the scope of this paper, \prs{key references include the original stabilty result \cite{CohEdeHar2007}, and its algebraic and categorical generalizations in \cite{chazal2012structure} and ~\cite{Bubenik_2014} respectively.} Additionally,  stability results
have been been shown for specific constructions, e.g.  geometric complexes ~\cite{chazal2014persistence}.
This should  not be considered as a complete list as there is a large body of work on stability (for distances other than Wasserstein distance) which we do not review here.


\subsection{Lipschitz functions on compact manifolds}
The work most related to the results presented in this paper can be found in  ~\cite{cohen2010lipschitz}.
To the best of our knowledge, this paper contains the main existing stability result for bounding the ($p\neq \infty$)-Wasserstein distance between two persistence diagrams. It is for the setting of sub-level set filtrations of Lipschitz functions. \prs{This stability result depends on a quantity called \emph{degree $k$ total persistence.}}
\begin{definition}[\cite{cohen2010lipschitz}]
	A metric space $X$ \emph{implies bounded degree $k$-total persistence} if there exists a constant $C_X$ that depends only on $X$ such that
	$$\sum_{x\in \Dgm(f)}\|x-\Delta\|_k^k < C_X $$
	for every tame function $f$ with Lipschitz constant $Lip(f)\leq 1$.
\end{definition}
\kts{It is proven in ~\cite{cohen2010lipschitz} that sublevel-set filtrations of tame Lipschitz functions on triangulable, compact metric spaces that imply bounded degree-$k$ total persistence enjoy $p$-Wasserstein stability for $p>k$.}

\begin{theorem}[Wasserstein Stability Theorem~\cite{cohen2010lipschitz}] \label{thm:cohen}
	\prs{Let $X$ be a triangulable, compact metric space that implies bounded degree-$k$ total persistence, for some real number $k\geq 1$, and let $f,g:X\to \R$ be two tame Lipschitz functions. Then $$W_p(\Dgm(f),\Dgm(g))\leq C^{1/p}\|f-g\|_\infty^{1-\frac{p}{k}}$$ for all $p\geq k$, where $C=C_X \max\{Lip(f)^k, Lip(g)^k\}$ and $C_X$ is a constant dependent on $X$.}
\end{theorem}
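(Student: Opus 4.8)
The plan is to deduce the bound from two ingredients that are already available: the classical bottleneck stability theorem stated in Section~\ref{sec:intro}, which controls the $\infty$-norm of the displacement of each point of the diagram under a suitable matching, and the standing hypothesis that $X$ \emph{implies bounded degree-$k$ total persistence}, which controls the quantity $\mathrm{Pers}_k(h):=\sum_{\Dpt\in\Dgm(h)}\mathrm{pers}(\Dpt)^k$. The bridge between them is the elementary exponent splitting $e^{p}=e^{p-k}e^{k}\le\delta^{p-k}e^{k}$, valid when $p\ge k$ and $0\le e\le\delta$. Concretely, if $\gamma$ is a degree-preserving matching between $\Dgm(f)$ and $\Dgm(g)$ in which every matched pair satisfies $e_\Dpt:=\|\Dpt-\gamma(\Dpt)\|_\infty\le\delta:=\|f-g\|_\infty$, then for $p\ge k$
\[
W_p(f,g)^p \;\le\; \sum_{\Dpt} e_\Dpt^{\,p} \;=\; \sum_{\Dpt} e_\Dpt^{\,p-k}\,e_\Dpt^{\,k} \;\le\; \delta^{\,p-k}\sum_{\Dpt} e_\Dpt^{\,k},
\]
so everything reduces to bounding $\sum_{\Dpt} e_\Dpt^{\,k}$ by a constant of the shape $C_X\max\{\mathrm{Lip}(f)^k,\mathrm{Lip}(g)^k\}$. (I note in passing that the exponent $1-\tfrac{p}{k}$ in the displayed statement should read $1-\tfrac{k}{p}$: for $p\ge k$ the former would make the right-hand side diverge, whereas the computation above produces the latter.)

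The first genuine step is to produce a matching for which this last sum is controllable. Bottleneck stability provides an optimal bottleneck matching $\gamma_0$ of cost $\le\delta$, so every matched pair has $e_\Dpt\le\delta$; tameness guarantees that the diagrams are essentially finite, so such a matching exists and all the sums below converge. I would then \emph{economize} $\gamma_0$: whenever an off-diagonal matched pair $(\Dpt,\gamma_0(\Dpt))$ has $e_\Dpt>\tfrac12\max\{\mathrm{pers}(\Dpt),\mathrm{pers}(\gamma_0(\Dpt))\}$, reroute both $\Dpt$ and $\gamma_0(\Dpt)$ to their diagonal projections. Each such rerouting has cost $\tfrac12\mathrm{pers}(\cdot)$, which is no larger than the distance it replaces and hence still $\le\delta$, so the resulting matching $\gamma$ still has bottleneck cost $\le\delta$; moreover, every matched pair now either meets the diagonal, contributing $e_\Dpt^{\,k}=(\tfrac12\mathrm{pers}(\Dpt))^k$, or stays off-diagonal with $e_\Dpt^{\,k}\le 2^{-k}\big(\mathrm{pers}(\Dpt)^k+\mathrm{pers}(\gamma(\Dpt))^k\big)$. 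Summing over the matching therefore yields $\sum_{\Dpt} e_\Dpt^{\,k}\le \mathrm{Pers}_k(f)+\mathrm{Pers}_k(g)$, up to a harmless constant factor that I absorb into $C_X$.

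It then remains to invoke the hypothesis. If $\mathrm{Lip}(f)=0$ then $f$ is constant and $\Dgm(f)$ contributes nothing; otherwise $f/\mathrm{Lip}(f)$ is a $1$-Lipschitz tame function on $X$, so the hypothesis gives $\mathrm{Pers}_k(f)/\mathrm{Lip}(f)^k=\mathrm{Pers}_k\!\big(f/\mathrm{Lip}(f)\big)\le C_X$, i.e. $\mathrm{Pers}_k(f)\le C_X\mathrm{Lip}(f)^k$, and likewise for $g$. Hence $\sum_{\Dpt} e_\Dpt^{\,k}\le 2C_X\max\{\mathrm{Lip}(f)^k,\mathrm{Lip}(g)^k\}$, and substituting into the interpolation bound and taking $p$-th roots gives $W_p(f,g)\le C^{1/p}\|f-g\|_\infty^{1-k/p}$ with $C=C_X\max\{\mathrm{Lip}(f)^k,\mathrm{Lip}(g)^k\}$ (the numerical factors being absorbed into the choice of $C_X$; a careful accounting produces an explicit constant).

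The step I expect to be the main obstacle is the economization of the matching: one must check that rerouting the ``bad'' pairs to the diagonal genuinely preserves the per-pair bound $e_\Dpt\le\delta$ (so the $\infty$-control survives intact) while simultaneously buying the per-pair estimate in terms of persistences, and that the manipulation respects the homological-degree grading of the matching (so that the bottleneck theorem, applied degree by degree, still applies). The remaining care is routine: the normalization to the $1$-Lipschitz case, the finiteness of the sums (which follows from tameness together with the bounded-degree-$k$-total-persistence hypothesis), and the bookkeeping of multiplicative constants.
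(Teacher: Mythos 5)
This theorem is not proved in the paper; it is quoted (with a transcription error) from \cite{cohen2010lipschitz}, so there is no in-house proof to compare against. Your argument correctly reconstructs the one in that reference --- bottleneck stability supplies a matching with per-pair displacement at most $\delta=\|f-g\|_\infty$, the economization step (rerouting to the diagonal any pair whose displacement exceeds half the larger persistence) bounds $\sum e^k$ by a constant times $\mathrm{Pers}_k(f)+\mathrm{Pers}_k(g)$ while keeping the $\infty$-bound intact, and the splitting $e^p\le\delta^{p-k}e^k$ together with the degree-$k$ total persistence hypothesis (applied after rescaling to the $1$-Lipschitz case) yields the estimate --- and you are also right that the exponent should read $1-k/p$, not $1-p/k$, which is indeed a typo in the paper's statement.
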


To put our results into context, it is worthwhile understanding the limitations of this theorem. We will find lower bounds on $C_X$ and $k$, restricting ourselves to the case where $X$ is a compact $d$-dimensional manifold.  An important aspect is the requirement that the domain implies bounded degree-$k$ total persistence which will force this stability result to only hold for sufficiently large $p$.

To construct a counterexample for functions over manifolds we will use a function which is the sum of functions with supports over disjoint balls of small radius.

\begin{lemma}\label{lem:packing}
	Given an $d$-dimensional compact Riemannian manifold $X$ and $r>0$ small enough, there exists a packing of $\left\lfloor \frac{\vol(X)}{\kappa \omega_d 2^d r^d}\right\rfloor$ disjoint balls of radius $r$ in $X$, where $\omega_d$ is the volume of the $d$-dimensional Euclidean unit ball and $\kappa$ is a constant which depends on the infimum of the scalar curvature of $X$.
\end{lemma}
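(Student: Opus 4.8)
The plan is to produce the packing by a standard greedy/maximal-packing argument combined with volume comparison from Riemannian geometry. First I would fix $r>0$ small enough that $r$ is below the injectivity radius of $X$ (which is positive since $X$ is compact), so that metric balls of radius $r$ and $2r$ behave like Euclidean balls up to a bounded distortion factor. Then I would take a maximal collection of points $x_1,\dots,x_N\in X$ that are pairwise at distance greater than $2r$; maximality is achieved since $X$ is compact, and by the triangle inequality the open balls $B(x_i,r)$ are pairwise disjoint, giving the disjoint packing we want. It remains to bound $N$ from below.

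The lower bound on $N$ comes from the covering property dual to maximality: by maximality of the point set, the balls $B(x_i,2r)$ cover $X$, hence $\vol(X)\le \sum_{i=1}^N \vol\bigl(B(x_i,2r)\bigr)$. Now I would invoke Bishop--Gromov (or just the local volume comparison valid below the injectivity radius): since $X$ has scalar curvature bounded below, there is a constant $\kappa$, depending only on $\inf$ of the scalar curvature (and the dimension), such that $\vol\bigl(B(x_i,2r)\bigr)\le \kappa\,\omega_d\,(2r)^d = \kappa\,\omega_d\,2^d r^d$ for all $r$ small enough. Combining, $\vol(X)\le N\,\kappa\,\omega_d\,2^d r^d$, so $N\ge \vol(X)/(\kappa\,\omega_d\,2^d r^d)$, and since $N$ is an integer, $N\ge \left\lfloor \vol(X)/(\kappa\,\omega_d\,2^d r^d)\right\rfloor$. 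That is exactly the claimed bound, so the maximal $2r$-separated set provides a packing of at least that many disjoint $r$-balls.

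The main obstacle — really the only subtle point — is making precise the phrase ``$r$ small enough'' and the curvature-dependent constant $\kappa$: one needs $2r$ to be smaller than the injectivity radius so that the exponential map is a diffeomorphism on the relevant balls and the volume-comparison inequality $\vol(B(x,\rho))\le \kappa\,\omega_d\,\rho^d$ holds uniformly over $x\in X$. Compactness guarantees a uniform positive injectivity radius and a uniform lower scalar-curvature bound, so such a $\kappa$ exists; I would state this as the content of the standard Bishop--Gromov-type estimate rather than reprove it. Everything else (disjointness, the covering consequence of maximality, the floor function) is elementary.
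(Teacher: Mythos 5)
Your proof is correct and is precisely the ``standard arguments involving packing and covering numbers'' that the paper invokes without spelling out; the paper gives no further detail than that sentence, so you have filled in exactly what was intended (maximal $2r$-separated set, triangle inequality for disjointness, covering by $2r$-balls from maximality, volume comparison for the per-ball upper bound, divide). One tiny mismatch worth being aware of: the lemma attributes $\kappa$ to a bound on \emph{scalar} curvature, whereas Bishop--Gromov requires a lower bound on \emph{Ricci} curvature. On a compact manifold both are automatically bounded so your argument goes through, and in fact the second-order term in the small-ball volume expansion $\vol(B(p,\rho)) = \omega_d\rho^d\bigl(1 - \tfrac{S(p)}{6(d+2)}\rho^2 + O(\rho^4)\bigr)$ is governed by scalar curvature $S(p)$, which is presumably why the paper phrases it that way; but if you wanted a clean uniform estimate for all $\rho \le 2r$ rather than just asymptotics, citing Bishop--Gromov with a Ricci lower bound (available by compactness) as you do is the more robust route.
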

\begin{proof}
	\prsx{As we assume $X$ is compact, its scalar curvature is bounded from below. Let $s_{min}$ denote this lower bound. 
	 Corollary 3.2 in \cite{bobrowski2019random} upper bounds the volume of a ball of radius $r$ by $ \omega_d r^d (1-(s_{min}+o(1))r^2 )$. For our purposes, we may simplify this to $\kappa \omega_d r^d$ for small enough $r$ and a constant $\kappa$ which depends on $s_{min}$.
	 The result then follows from standard arguments relating packing and covering numbers.  The covering number with balls of radius $r$ must be at least $\vol(X)/\sup \vol(B_r)$ by a volume argument. Additionally,  the packing number with balls of radius $r$ is lower bounded by the covering number with balls of radius $2r$. Substituting the upper bound on the volume for balls of radius $2r$ gives the result.}
\end{proof}

\begin{lemma}\label{lem:teepee}
	Let $X$ an $d$-dimensional compact \kts{Riemannian} manifold. If $X$ implies bounded degree-$k$ total persistence then $k\geq d$.
\end{lemma}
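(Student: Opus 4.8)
The plan is to construct, for each sufficiently small $r>0$, a $1$-Lipschitz function $f_r$ on $X$ whose degree-$d$ persistence diagram forces $\|\Dgm(X(f_r))\|_d^d$ to grow without bound as $r\to 0$ — which would contradict bounded degree-$k$ total persistence for any $k\le d$, hence forcing $k\ge d$. (If the claim we need is really $k\ge d$, it suffices to produce, for each candidate $k<d$, a family violating bounded degree-$k$ total persistence; by Lemma~\ref{lem:powers}-type monotonicity, exhibiting unbounded degree-$d$ total persistence is the key case, and the smaller-$k$ cases then follow from $\|\cdot\|_k \ge \|\cdot\|_d$ on diagrams with bars of length $\le 1$... actually one must be slightly careful here, so I would instead directly build a family with unbounded degree-$k$ total persistence for the specific $k$ under consideration, scaling the bump heights to $1$.)

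First I would invoke the packing lemma immediately preceding the statement: for $r$ small there is a packing of $N_r = \left\lfloor \frac{\vol(X)}{\kappa\,\omega_d\,2^d r^d}\right\rfloor$ disjoint balls $B(c_i,r)$ in $X$, so $N_r = \Theta(r^{-d})$. On each ball I would place a radially symmetric "teepee" bump: a function supported in $B(c_i,r)$, equal to $0$ on the boundary sphere and decreasing linearly (in the geodesic distance to $c_i$) down to $-r$ at the center, i.e. $f_i(x) = \min\{0,\, d(x,c_i) - r\}$ on the ball and $0$ outside. This is $1$-Lipschitz (distance functions are $1$-Lipschitz, and gluing at the zero level is fine), and the sum $f_r = \sum_i f_i$ is $1$-Lipschitz because the supports are disjoint and all pieces agree (equal $0$) on the overlap of closures. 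Next I would identify a persistence class created by each bump: the sublevel set at level $-r+\epsilon$ near $c_i$ is a small geodesic ball (topologically a $d$-disk, so it is $\partial$ of a $d$-chain relative to... ) — more precisely, I'd pass to the bump functions' effect on $\Hg_{d-1}$ or use the Mayer–Vietoris / excision picture: as the level rises from $-r$ to $0$, the region $\{f_i < t\}$ inside $B(c_i,r)$ is a ball that grows and then "closes up" (fills the punctured ball) producing a degree-$(d-1)$ feature, or, if we track the complement, a degree-$d$ feature of length exactly $r$. I would make this precise by choosing the homological degree to match the $k$ in the hypothesis: on a $d$-manifold one can engineer bumps whose persistence lives in degree $k$ for any $1\le k\le d$ (e.g. use a bump that is a join/product of lower-dimensional teepees, or simply note that for the statement $k\ge d$ it is enough to handle one bad $k$ value at a time and by monotonicity of $\|\cdot\|_k$ the degree-$d$ construction is the binding one — so I'd focus on producing a degree-$(d{-}1)$ or degree-$d$ bar of length $\Theta(r)$ per bump).

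Then comes the counting step: each of the $N_r$ bumps contributes (at least) one bar of length $\ge c\,r$ in the relevant diagram, independently of the others because the supports are disjoint and homology localizes. Hence
$$\|\Dgm(X(f_r))\|_k^k \;\ge\; N_r \cdot (c r)^k \;\ge\; c^k\left(\frac{\vol(X)}{\kappa\,\omega_d\,2^d r^d} - 1\right) r^k \;=\; \Theta(r^{k-d}).$$
If $k<d$ this $\to\infty$ as $r\to 0$, contradicting the existence of a uniform bound $C_X$; therefore $k\ge d$.

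The main obstacle — and where I would spend the real effort — is the homological bookkeeping: showing cleanly that each disjoint teepee bump contributes a bar of length $\Theta(r)$ in a fixed homological degree, and that these bars do not cancel or merge in the global diagram. The disjointness of supports should make this a localization argument (Mayer–Vietoris or a direct decomposition of the filtered complex into the "background" part plus $N_r$ independent local filtered pieces), but writing it so it works simultaneously for all degrees $k\le d$, and pinning down exactly which degree the teepee feature appears in on a general $d$-manifold (as opposed to $\R^d$), is the delicate part. A secondary nuisance is making sure $r$ is small enough that each geodesic ball $B(c_i,2r)$ is a genuine metric ball diffeomorphic to a Euclidean ball (convexity radius), so that the local topology is as in $\R^d$; this is exactly the "$r$ small enough" hypothesis inherited from the packing lemma, so it costs nothing extra.
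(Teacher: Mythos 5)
Your proposal follows the same overall strategy as the paper: pack $\Theta(r^{-d})$ disjoint balls of radius $r$, sum teepee bumps to get a $1$-Lipschitz function, and count $\Theta(r^{-d})$ bars of length $\Theta(r)$ so that $\|\cdot\|_k^k = \Theta(r^{k-d})$ blows up for $k<d$. You have the packing input and the exponent exactly right.

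Where you stall --- ``pinning down exactly which degree the teepee feature appears in on a general $d$-manifold'' --- is in fact not an obstacle with your own choice of downward bumps $f_i(x)=\min\{0,\,d(x,c_i)-r\}$. For $\alpha\in(-r,0)$ the sublevel set $\{f_r\le\alpha\}$ is the disjoint union $\bigsqcup_i \bar B(c_i,r+\alpha)$ of $N$ small closed geodesic balls, each contractible (here is exactly where ``$r$ less than the convexity radius'' is used), so the sublevel set has $N$ connected components; at $\alpha=0$ the sublevel set jumps to all of $X$, which is connected, so $N-1$ components merge. Each bump except one therefore contributes an $\Hg_0$ bar born at $-r$ and dying at $0$, giving $N-1$ bars of length exactly $r$ in degree zero. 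No Mayer--Vietoris, no degree-$(d-1)$ or degree-$d$ analysis is needed: the features localize trivially because disjointness of supports makes the sublevel set literally a disjoint union until time $0$. (The paper's upward teepees $\max\{r-d(x,p),0\}$ push the same bars into a higher homological degree, which is why its accounting looks slightly different; the downward version you chose is cleaner.) You should also drop the aside about ``scaling the bump heights to $1$'': the $1$-Lipschitz constraint forces bump height $\le r$ on a ball of radius $r$, and that coupling is precisely what produces the $r^k$ factor --- detaching height from radius would break the Lipschitz bound. With the $\Hg_0$ observation in place, your proposal closes and matches the paper's argument.
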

\begin{proof}
	We will prove this via a counterexample when $k<d$. \kts{Since $X$ is compact there is a global lower bound $\rho$ on the injectivity radius \cite[Theorem 2.1.10]{klingenberg1995riemannian}.  Choose $0<r<\rho$ small enough so that by Lemma \ref{lem:packing} there exists  $N=\left\lfloor \frac{\vol(X)}{\kappa\omega_d 2^d r^d}\right\rfloor$ disjoint balls of radius $r$ in $X$. Let $P=\{p_1, \ldots p_N\}$ be the centers of these balls.}
	Set $T_{r,p}$ to be a teepee shaped function about $p$ with height $r$, with $T_{r,p}(x)=\max\{r-d(x,p), 0\}$.
	We then consider functions $f_r=\sum_{i=1}^N T_{r,p_i}$(see Figure~\ref{fig:teepee}).  Observe that $f$ is 1-Lipschitz. Then,
	$$||\Dgm(f)||^k_k  = \sum_{i=1}^N r^k = \left\lfloor \frac{\vol(X)}{\kappa\omega_d 2^d r^d}\right\rfloor r^k = \Theta(r^{k-d}).$$
	When $k<d$ then this cannot be uniformly bounded from above for all small enough $r>0$.
\end{proof}
\begin{figure}[th]
	\centering
	\begin{tikzpicture}[scale=0.6,every node/.style={scale=0.8}]
		\draw [<->,thick] (-2.4,0) -- (2.4,0) node (xaxis) [at end, below] {$x$};
		\draw [->,thick] (0,-0.1) -- (0,2) node [at start, below] {$p$};
		\draw (-0.1,1.5) -- (0.1,1.5) node (peak) [at start,left] {$r$};
		\draw (-1.5,0) -- (0,1.5) -- (1.5,0);
		\draw (-1.5,-0.1) -- (-1.5,0.1) node (l) [at start, below] {$p-r$};
		\draw (1.5,-0.1) -- (1.5,0.1) node (r) [at start, below] {$p+r$};
	\end{tikzpicture}\hspace{1cm}
	\begin{tikzpicture}[scale=0.6,every node/.style={scale=0.8}]
		\draw [<->,thick] (-8,0) -- (8,0) node (xaxis) [at end, below] {$x$};
		\draw [->,thick] (0,-0.1) -- (0,2) node [at start, below] {$p_i$};
		\draw (-0.1,1.5) -- (0.1,1.5) node (peak) [at start,left] {$r$};
		\draw (-1.5,0) -- (0,1.5) -- (1.5,0);
		\draw (-1.5,-0.1) -- (-1.5,0.1) node [at start, below] {$p_i-r$};
		\draw (1.5,-0.1) -- (1.5,0.1) node  [at start, below] {$p_i+r$};

		\draw  (5,-0.1) -- (5,0.1) node [at start, below] {$p_{i+1}$};
		\draw (3.5,0) -- (5,1.5) -- (6.5,0);
		\draw (3.5,-0.1) -- (3.5,0.1) node  [at start, below] {$p_{i+1}-r$};
		\draw (6.5,-0.1) -- (6.5,0.1) node  [at start, below] {$p_{i+1}+r$};

		\draw  (-5,-0.1) -- (-5,0.1) node [at start, below] {$p_{i-1}$};
		\draw (-3.5,0) -- (-5,1.5) -- (-6.5,0);
		\draw (-3.5,-0.1) -- (-3.5,0.1) node  [at start, below] {$p_{i-1}+r$};
		\draw (-6.5,-0.1) -- (-6.5,0.1) node  [at start, below] {$p_{i-1}-r$};

		\node (ld) at (-7,0.75) {$\mathbf{\cdots}$};
		\node (rd) at (7,0.75) {$\mathbf{\cdots}$};
	\end{tikzpicture}

	\caption{\label{fig:teepee} (Left) The teepee function and (right) sum of teepee functions from Lemma~\ref{lem:teepee}.}
\end{figure}
Lemma \ref{lem:teepee} is easily extended to more general spaces, such as stratified spaces.
If we compare the teepee function from Lemma \ref{lem:teepee} to the zero function on the same space, Lemma \ref{lem:teepee} allows us to deduce  a lower bound on the value of $C_X$ in Theorem \ref{thm:cohen}.  Observe that the teepee function has  an sup-norm of $r$. Substituting this into the bound from Theorem \ref{thm:cohen}, for small enough $r$ we obtain that  $C^{1/p}_X$  grows linearly as a function of the volume of $X$. That is, we have
$$ C_X^{1/p} \geq \left \lfloor \frac{\vol(X)}{\kappa\omega_d 2^d r^d}\right \rfloor r^{k-1+p/k}\geq  \frac{\vol(X)}{\kappa\omega_d 2^{d+1}} r^{k-1+p/k- d}$$

Although our counterexample for bounded degree $k$-total persistence is only for homology in degree $n-1$ (for a manifold of dimension $n$), analogous counterexamples can be made for homology in other degrees. For example, by taking the negative we have a counterexample with degree $0$ homology. For other homology degrees, we can use different local functions such as one with support
on an annulus rather than a ball.

\prs{Besides \cite{cohen2010lipschitz}, there are very few Wasserstein stability results in the literature.}
In \cite{chen2011diffusion}, Chen and Edelsbrunner consider non-Lipschitz functions on non-compact spaces, using scale-space diffusion. They focus on convergence properties as opposed to stability but also attain some stability results for the $p$-norm of the diagram (Wasserstein distance to the empty diagram). Crucially, just as in the Lipschitz case, this $p$-Wasserstein stability only holds for $p > d$ where $d$ is the dimension of the domain. The condition $p > m$ also appears in stability results for \v Cech filtrations, or equivalently distance filtrations, for point clouds sampled on an $m$-dimensional submanifold of $\R^d$ \cite{arnal2024wasserstein}.
%
%

\subsection{Erroneous appeals to previous $p$-Wasserstein stability results}
Unfortunately, the Lipschitz Wasserstein stability theorem in \cite{cohen2010lipschitz} appears to be one of the most misunderstood and miscited results within the field of topological data analysis. Common errors include using a small $p$ (often $1$ or $2$) for high dimensional data, assertions that the persistence diagrams depend Lipschitz-continuously on data and applying the theorem to Vietoris-Rips filtrations. Luckily, many of the erroneous applications can now be covered by the stability results in this paper. Rather than discuss individual examples, in Section~\ref{sec:consequences} we examine the consequences of stability for various topological summaries including vectorisations such as the persistent homology rank function.
%
%
%
%
%

\section{Cellular Wasserstein Stability}\label{sec:simplicial_stability}
We begin with a result mirroring the classical stability theorem by bounding the differences at the chain level, which will induce an upper bound on the $\power$-Wasserstein distance between the corresponding diagrams. As stated in Section~\ref{sec:preliminaries}, we remind the reader that $\SC$ is a finite CW-complex and $f:\SC \to \R$ is a monotone function on the complex, so all sublevel sets are subcomplexes and that the persistence module associated is p.f.d.  
We begin by defining an $L^p$ norm of a function on $\SC$.
\begin{definition}
	The $L^\power$ norm of a function $f: \SC\rightarrow \R$ is given by
	$$\|f\|_\power^\power=\sum_{\simplex\in \SC}|f(\simplex)|^\power.$$
\end{definition}

This induces a distance between functions. The $L^\power$ distance between two monotone functions $f,g: \SC\rightarrow \R$ is given by
$\|f-g\|_\power^\power. $
Note that this notions of the $L^\power$ distance between functions is analogous to the $L^\power$ distance for functions over discrete sets where  the sum here is over the discrete set of cells. \prsx{In this section, we prove the cellular Wasserstein stability.}
\begin{lemma}\label{lem:noswitch}
	Let $f,g:\SC\to \R$ be monotone functions over a CW complex $\SC$ such that the partial orders induced by $f$ and $g$ embed into a common total order, then
	$$\Was{\power}(\Dgm(f),\Dgm(g))\leq ||f-g||_\power$$
	and
	$\Was{\power}(\Dgm_k(f),\Dgm_k(g))^p\leq \sum_{\dim(\sigma)\in\{k, k+1\}} |f(\sigma)-g(\sigma)|^p.$
\end{lemma}


The main idea in the proof is to bound the Wasserstein distance by considering a straight line homotopy between $f$ and $g$. We split the straight line homotopy into finitely many sub-intervals where a local result will hold, and then collect together the summands for the final desired inequality. By focusing on small enough sub-intervals we can exploit a consistent correspondence between the coordinates of the points in the persistence diagram with critical cells in the filtration.  
As noted in the introduction, this idea was previously used in \cite{cohen2006vines} to
show a bottleneck stability result for functions on simplicial complexes.

\kts{ We will be using the language of partial and total orders to describe when we can consistently label birth and death times of points in the persistence diagrams by the same choice of simplices. }

\begin{definition}
	\kts{ A \emph{partial order} $P\subset X\times X$ over a set $X$ is a binary relation ($<$) that is irreflexive ($(a,a)\notin P$ for all $a$), asymmetric ($(a,b)\in P$ implies $(b,a)\not\in P$), and transitive ($(a,b),(b,c)\in P$ implies $(a,c)\in P$). A \emph{total order} is a partial order which is connected (for $a\neq b$ either $(a,b)$ or $(b,a)$ must be in $P$). We say that $P$ embeds into $Q$ if $P\subset Q$.}
\end{definition}

There is a natural partial order we can construct from a monotone function. This partial order records when one cell must always appear before another one. 

\begin{definition}\label{def:partialorder}
	$f:\SC\to \R$ be a monotone function. We define the partial order $P_f$ on $\SC$ induced by $f$ by $(\sigma, \tau)\in P_f$ (with $\sigma \neq \tau$) whenever
	$f(\sigma) < f(\tau)$, or
	$f(\sigma) = f(\tau)$ and $\sigma \subset \tau$.
\end{definition}
We first consider the easy case by bounding the distance between functions where the ordering of cells does not change. \prsx{Typically, the partial order induced by $f$ is not a total order.} However, it will be convenient to embed the partial order into a total order, which can always be done for finite partial orders. We say two partial orders can be embedded into a \emph{common total ordering} if the total order contains both partial orders.
\prsx{We state the following proposition for completeness.}
	\begin{prop}\label{prop:persistence_algorithm}
		\prsx{After extending the partial order $P_f$ to a total order $Q$ there is a unique partition of the underlying complex into pairs} $(\sigma, \tau)$ (such that $(\sigma, \tau)\in Q$) and singletons $(\omega, \emptyset)$ such that
		\begin{enumerate}
			\item each cell appears exactly once,
			\item the points in the persistence diagram are $(f(\sigma), f(\tau))$ and $(f(\omega), \infty)$.
		\end{enumerate}
	\end{prop}
	This follows directly from the \emph{persistence algorithm} e.g. \cite{edelsbrunner2000topological,zomorodian2005computing,cohen2006vines}, where the interval decomposition is computed by finding a pairing of cells. Briefly, each cell either creates a new interval, i.e., generates a new homology class, at $f(\sigma)$ or $f(\omega)$, or ends an interval, i.e., bounds an existing class, at $f(\omega)$. 
Given a total order each insertion of a cell induces a rank 1 change to the homology \cite{delfinado1993incremental}, which allows us to deduce uniqueness. We also remark that the result may be shown using the matroidal properties of the cycle and boundary bases, see ~\cite{skraba2017randomly}. See Appendix \ref{appendix:persistence} for further details.


\begin{proof}[Proof of Lemma \ref{lem:noswitch}]
	By assumption, we may fix one total order $Q$ such that the partial orders $P_f$ and $P_g$ are both contained in $Q$. \prs{By Proposition \ref{prop:persistence_algorithm}, we may partition the cells in $K$ into pairs $(\sigma_i, \tau_i)$ and singletons $(\omega_j, \emptyset)$ such that each cell appears exactly once.} 
	The partition of the cells for $f$ and $g$ will be the same as it depends only on the total order $Q$. \prs{Additionally,} the intervals of the persistence modules for $f$ and $g$ are then given by
	$\{[f(\sigma_i), f(\tau_i)): f(\tau_i)>f(\sigma_i)\} \cup \{[f(\omega_j), \infty)\}$ and  $\{[g(\sigma_i), g(\tau_i): g(\tau_i)>g(\sigma_i)\} \cup \{[g(\omega_j), \infty)\}$ respectively. 

	To bound the Wasserstein distance between $\Dgm_k(f) $ and $\Dgm_k(g)$ we will consider the transportation plan where we are match $(f(\sigma_i), f(\tau_i))$ with $(g(\sigma_i), g(\tau_i))$ and $(f(\omega_j),\infty)$ with $(g(\omega_i), \infty)$. Let $A=\{i:f(\tau_i)>f(\sigma_i) \text{ and }g(\tau_i)>g(\sigma_i)\}$, $B=\{i: f(\tau_i)>f(\sigma_i) \text{ and }g(\tau_i)=g(\sigma_i)\}$ and $C=\{i: f(\tau_i)=f(\sigma_i) \text{ and }g(\tau_i)>g(\sigma_i)\}$.
	Construct a matching $\Mch\subset ( \Dgm(f)\cup \Delta) \times (\Dgm(g) \cup \Delta)$ given by the union of the multisets
	\begin{align*}
		 & \{((f(\sigma_i), f(\tau_i)), (g(\sigma_i), g(\tau_i))): i\in A\} \\
		 & \{((f(\sigma_i), f(\tau_i)), \Delta):i \in B\}                   \\
		 & \{(\Delta, (g(\sigma_i), g(\tau_i))): i\in C\}                   \\
		 & \{((f(\omega_j),\infty), (g(\omega_i), \infty))\}.
	\end{align*}

	Note that for $i\in B$ we have $\|(f(\sigma_i), f(\tau_i))- \Delta\|^p_p \leq |f(\sigma_i)-g(\sigma_i)|^p +|f(\tau_i))-g(\tau_i)|^p$ as $g(\sigma_i)=g(\tau_i)$, and for $i\in C$ we have $\|\Delta -(g(\sigma_i), g(\tau_i))\|^p_p \leq |f(\sigma_i)-g(\sigma_i)|^p +|f(\tau_i))-g(\tau_i)|^p$ as $f(\sigma_i)=f(\tau_i)$.
	The $\power$-th power of the cost of this matching $\Mch$ is thus bounded by
	$$\sum_\sigma |f(\sigma)-g(\sigma)|^p$$
	as every cell appears at most once.
	%
	Since the $\power$-Wasserstein distance is the smallest possible matching cost, we conclude that
	$$\Was{\power}(\Dgm(f), \Dgm(g))^\power\leq \|f-g\|_\power^\power.$$
	%
	%
	The proof for when we restrict to homology dimension $k$ follows from the observation that only $k$-cells and the $k+1$-cells can affect homology in dimension $k$.
\end{proof}

We are now ready to proof the general case of Cellular Wasserstein Stability.

\begin{theorem}[Cellular Wasserstein Stability Theorem]\label{thm:simpbound}
	Let $f,g:\SC\to \R$ be monotone functions. Then
	\begin{enumerate}
		\item[(i)] $\Was{\power}(\Dgm(f),\Dgm(g))\leq \|f-g\|_\power,$
		\item[(ii)]	$\Was{\power}(\Dgm_k(f),\Dgm_k(g))^p\leq \sum_{\dim(\sigma)\in\{k, k+1\}} |f(\sigma)-g(\sigma)|^p.$
	\end{enumerate}
\end{theorem}
\begin{proof}
The main idea in the proof is to bound the Wasserstein distance by considering a straight line homotopy between $f$ and $g$. Let $f_t:\SC\to\R$ be the linear interpolation between $f$ and $g$ as $t$ varies. That is, for $t\in [0,1]$ and $\simplex \in \SC$, let $f_t(\simplex)=(1-t)f(\simplex)+tg(\simplex).$
	Observe that $f_t$ is monotone for all $t$ and that for \kts{$0\leq t\leq t'\leq 1$} we have
	$\|f_t-f_{t'}\|_p=|t'-t|\|f-g\|_\power$.

	Each of the functions $t\mapsto f_t(\simplex)$ is linear which implies that $f_t(\simplex)=f_t(\hat{\simplex})$ for two or more values of $t$ if and only if $f(\simplex)=f(\hat{\simplex})$ and $g(\simplex)=g(\hat{\simplex})$, in which case $f_t(\simplex)=f_t(\hat{\simplex})$ for all $t\in [0,1]$).

\begin{figure}[htbp]
	\centering\includegraphics[width=0.4\textwidth,page=1]{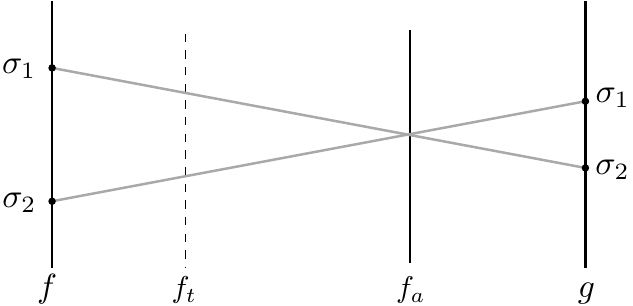}
	\caption{\label{fig:interp} A linear interpolation between functions $f$ and $g$ can be subdivided into intervals where the ordering does not change in the interior of the interval. If the underlying space is a finite CW complex, the number of such intervals is finite.}
\end{figure}

Observe that this straight line homotopy may be divided into intervals where the ordering does not change, see Figure~\ref{fig:interp}. Since our underlying space is a finite CW complex, the number of such intervals must also be finite. There are only finitely many values $t=a_1,a_2, \ldots a_n$ in $(0,1)$, sorted in increasing value, where there exists $\simplex, \hat{\simplex}$ with $f_t(\simplex)=f_t(\hat{\simplex})$ but $f(\simplex)\neq f(\hat{\simplex})$. Set $a_0=0, a_{n+1}=1$. In each of the intervals $t\in(a_i, a_{i+1})$, \prs{all $f_t$ induce the same partial ordering}. Hence, there exists a \prs{common} total ordering which is compatible with all the induced partial orders on $\SC$ for any choice in $(a_i, a_{i+1})$. This choice of total ordering will also contain the partial orders induced by $f_{a_i}$ and $f_{a_{i+1}}$. As noted above, if two simplices have equal function values, at any point in the open interval, they have equal function values over the entire interval.
	We can therefore apply Lemma~\ref{lem:noswitch} for with $f=f_{a_i}$ and $g=f_{a_{i+1}}$.


	\begin{align*}
		\Was{\power}(\Dgm(f),\Dgm(g)) & \leq \sum_{i=0}^n \Was{\power}(\Dgm(f_{a_i}),\Dgm(f_{a_{i+1}}))                                           \\
		                              & \leq  \sum_{i=0}^n \|f_{a_i}-f_{a_{i+1}}\|_\power=\sum_{i=0}^n(a_{i+1}-a_i) \|f-g\|_\power=\|f-g\|_\power
	\end{align*}

	The proof for (ii) 
	is analogous, using the corresponding bound in Lemma \ref{lem:noswitch} but restricting to homological dimension $k$.
\end{proof}

\section{Applications}\label{sec:applications}
In this section we will present some applications of the results of the cellular Wasserstein stability theorem. Sublevel set filtrations of grayscale images and persistent homology transforms of different geometric embeddings of the same simplicial complex are both cases which involve height functions determined by vertex values. We will prove Lipschitz stability in terms of the $l_p$ norms over the set of vertices, where the Lipschitz constants are bounded by the number of cells in the links of each vertex.
We also will prove some immediate corollaries for stability of Rips filtrations.

There are different notions of points we consider; points in the persistence diagrams and elements of a point set in $\R^d$. To minimize confusion, we restrict the term points to refer to persistence diagrams, preferring the term vertices for the more geometric notions. While this has the drawback of resulting in references to vertices in a point set, we feel this is a good compromise. 

\subsection{Stability of the sublevel set filtrations of grayscale images}\label{sec:images}
Our first application is \kts{to} the stability of \kts{the persistent homology of} grayscale images. \kts{While most applications would be for two and three dimensional images,} we will state our results for more general $d$-dimensional images. An image is a real-valued piecewise constant function where each pixel/voxel is assigned a value. There are two main methods in the literature for creating a filtration of cubical complexes from a grayscale image.

\subsubsection*{Method 1}
We can create a cubical complex from a 2D image where each pixel corresponds to a 2-dimensional cubical cell. The edges correspond to sides of the pixels, and vertices to the corners. This construction naturally extends to higher dimensional images. There is a natural sublevel set filtration induced on the complex: the image defines values for the maximal dimensional cells (i.e. pixels/voxels) and the function values for lower dimensional cells are given as the minimum value over all cofaces.

\subsubsection*{Method 2}
We can also consider the dual of the cubical complex in Method 1, which is again a cubical complex. In a 2D image we have a vertex for each pixel and an edge for each pair of neighbouring pixels (not including diagonals), and 2-cells where four pixels intersect. This construction naturally extends to higher dimensional images. We can build a filtration on this cubical complex by setting the values on the vertices as those of the pixel/voxel values provided, and setting the function values for higher dimensional cells as the maximum value over all faces.\\

It is worth noting that the sublevel set filtrations for these two methods can result in drastically different persistent homology. This difference stems from whether diagonally neighbouring pixels are considered connected. However, applying Theorem~\ref{sec:simplicial_stability} separately to each method obtains stability for both methods individually. \prs{While Method 1, is far more common in TDA applications, e.g. \cite{robins2011theory, tymochko2020using,dunaeva2016classification}, Method 2 often appears as the dual complex of an image \cite{bleile2022persistent}.}
\begin{theorem}
	Let $f$ and $g$ be the grayscale functions of two images \pr{of the same dimensions} \kt{over the same grid of pixels}. Let $\hat{f}$ and $\hat{g}$ be the corresponding monotone functions on the underlying cubical complex generated by either Method 1 or 2 (both $\hat{f}$ and $\hat{g}$ using the same method).  Then
	$$\Was{p}(\Dgm(\hat{f}),\Dgm(\hat{g})) \leq \left(\sum\limits_{i=0}^{d} 2^{d-i} \binom{d}{i}\right) ||f-g||_p   $$
\end{theorem}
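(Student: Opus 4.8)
The plan is to apply the Cellular Wasserstein Stability Theorem (Theorem~\ref{thm:simpbound}) to $\hat f$ and $\hat g$ directly, and then bound the \emph{cellular} $L^p$-distance $\|\hat f-\hat g\|_p$ (the sum over all cells of the cubical complex) by the \emph{pixel-wise} $L^p$-distance $\|f-g\|_p$ (the sum over pixels) using a purely combinatorial estimate. The first step is to check that $\hat f$ and $\hat g$ are monotone on the relevant cubical complex. In Method~1 the value on a cell $\simplex$ is $\min_v f(v)$ over the top-dimensional cofaces $v$ of $\simplex$; if $\face$ is a face of $\simplex$ then the top-cofaces of $\simplex$ are among those of $\face$, so the minimum over the larger set is no larger, giving $\hat f(\face)\le\hat f(\simplex)$. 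In Method~2 the value on $\simplex$ is $\max_v f(v)$ over the vertices $v$ of $\simplex$; if $\face$ is a face of $\simplex$ its vertices are among those of $\simplex$, so again $\hat f(\face)\le\hat f(\simplex)$. Hence Theorem~\ref{thm:simpbound} applies and yields $\Was{p}(\Dgm(\hat f),\Dgm(\hat g))\le\|\hat f-\hat g\|_p$.

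The heart of the argument is estimating $\|\hat f-\hat g\|_p^p=\sum_{\simplex\in\SC}|\hat f(\simplex)-\hat g(\simplex)|^p$ cell by cell. Fix $\simplex$. In Method~1 write $\hat f(\simplex)=\min_v f(v)$ with $v$ ranging over the $d$-cells containing $\simplex$; in Method~2 write $\hat f(\simplex)=\max_v f(v)$ with $v$ ranging over the vertices of $\simplex$. Both $\min$ and $\max$ are $1$-Lipschitz for the sup-norm, so $|\hat f(\simplex)-\hat g(\simplex)|\le\max_v|f(v)-g(v)|$, and therefore $|\hat f(\simplex)-\hat g(\simplex)|^p\le\sum_v|f(v)-g(v)|^p$, the sum being over the pixels $v$ incident to $\simplex$ in the appropriate sense.

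Next I would sort the cells by dimension $i$ and interchange the order of summation, which reduces the estimate to a counting question: for each pixel $v$, to how many $i$-dimensional cells is it incident? In the $d$-dimensional cubical grid a $d$-cube has exactly $\binom{d}{i}2^{d-i}$ faces of dimension $i$ — choose the $i$ "interval" directions, and in each of the remaining $d-i$ directions choose one of the two endpoints — so in Method~1 each pixel contributes to at most $\binom{d}{i}2^{d-i}$ cells of dimension $i$; symmetrically, in Method~2 each vertex lies in exactly $\binom{d}{i}2^{i}$ cells of dimension $i$, which is the same multiplicity sequence after reindexing $i\mapsto d-i$. (Pixels on the boundary of the image are incident to strictly fewer cells, which only helps.) Consequently, in either method,
\begin{align*}
\|\hat f-\hat g\|_p^p &= \sum_{i=0}^{d}\ \sum_{\dim\simplex=i}|\hat f(\simplex)-\hat g(\simplex)|^p
\le \sum_{i=0}^{d}2^{d-i}\binom{d}{i}\sum_{v}|f(v)-g(v)|^p \\
&= \Big(\sum_{i=0}^{d}2^{d-i}\binom{d}{i}\Big)\,\|f-g\|_p^p ,
\end{align*}
since $\sum_v|f(v)-g(v)|^p=\|f-g\|_p^p$ as $f,g$ are given on the pixels. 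Taking $p$-th roots and combining with the bound from Theorem~\ref{thm:simpbound} gives the claim; in fact one obtains the sharper constant $\big(\sum_{i=0}^d 2^{d-i}\binom{d}{i}\big)^{1/p}=3^{d/p}$, which is at most $3^{d}=\sum_{i=0}^d 2^{d-i}\binom{d}{i}$.

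The step I expect to be the main obstacle is getting the incidence counts exactly right and seeing that they are uniform across the two constructions — recognising that "faces of a $d$-cube" in Method~1 and "cells through a vertex" in Method~2 produce the same total multiplicity $\sum_i 2^{d-i}\binom{d}{i}=3^d$ — together with the bookkeeping that boundary pixels are incident to fewer cells so the inequality is preserved. The remaining ingredients (monotonicity of $\hat f,\hat g$, the sup-norm Lipschitz property of $\min$ and $\max$, and the appeal to Theorem~\ref{thm:simpbound}) are routine.
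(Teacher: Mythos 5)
Your proposal is correct and follows the same basic strategy as the paper: apply Theorem~\ref{thm:simpbound} to the cubical complex, then bound the cellular $L^p$ distance $\|\hat f-\hat g\|_p$ by a combinatorial incidence count against the pixel-wise $L^p$ distance. You also verify the monotonicity of $\hat f, \hat g$ (a point the paper takes for granted) and correctly compute the incidence multiplicities $2^{d-i}\binom{d}{i}$ (faces of a $d$-cube) for Method~1 and $2^{i}\binom{d}{i}$ (cells through a vertex) for Method~2, both summing to $3^d$.

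The noteworthy difference is that you are more careful than the paper about how the exponent $p$ interacts with the counting, and as a result you obtain the strictly sharper constant
\[
\Big(\sum_{i=0}^{d} 2^{d-i}\binom{d}{i}\Big)^{1/p} = 3^{d/p},
\]
rather than the paper's stated $3^{d}$. Your chain $|\hat f(\simplex)-\hat g(\simplex)|^p \le \max_v|f(v)-g(v)|^p \le \sum_v|f(v)-g(v)|^p$, followed by Fubini over the incidence relation, is exactly the right computation: it bounds $\|\hat f-\hat g\|_p^p$ by $3^d\|f-g\|_p^p$, and taking $p$-th roots gives $3^{d/p}$. The paper's sketch --- ``each $d$-cube contains $2^{d-\ell}\binom{d}{\ell}$ $\ell$-faces, so summing over dimensions bounds how many cell-values change'' --- never explicitly tracks where the $p$-th root lands, and the constant it states ($3^d$, no $1/p$) is non-optimal. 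Since $3^{d/p}\le 3^d$ for $p\ge 1$, your bound implies the theorem as stated, and in fact improves it. In short: same route, but your version is the careful one, and the constant you get is the right one.
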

\begin{proof}
	Let us suppose we are using Method 1 for  constructing  our persistence diagrams.
	As the underlying space is a cubical complex, changing the function value of a maximal cell can affect all of the lower dimensional cells it contains. Each $d$-dimensional hypercube contains  $2^{d-k} \binom{d}{k}$ $k$-dimensional hypercubes on its boundary.  Summing up over all dimensions yields a bound on how many cell-values change when we change the value of a pixel. Applying Theorem~\ref{thm:simpbound} yields the result.

	The proof for Method 2 is similar. Changing the function value of a vertex can affect all of the higher dimensional cofaces. There are at most $2^{k} \binom{d}{k}$ possibly affected $k$-dimensional cells and applying Theorem~\ref{thm:simpbound} completes the proof.
\end{proof}

\subsection{Stability of persistent homology transforms} \label{sec:transforms} 
The study of persistent homology transforms are a relatively recent development in the persistent homology literature~\cite{turner2014persistent,ghrist2018persistent,curry2018many} with applications to statistical shape analysis.
The most general setting of the PHT is for constructible sets which are compact definable sets \kt{(see \cite{curry2018many})}.  
In this paper, we consider a smaller class of sets of finite embedded geometric simplicial complexes.

\begin{definition}
	Let $K$ be a finite simplicial complex with vertex set $V$. For $f:V\to \R^d$ we can define a piece-wise linear extension of $f:K\to \R^d$ by setting $f(\sum a_i v_i)=\sum a_i f(v_i)$. We call $f:K\to \R^d$ a \emph{geometric vertex embedding} of $K$ if $f(K)$ is a geometric realisation of $K$ (i.e. no self-intersections). A subset $M\subset \R^d$ is \emph{finite embedded geometric simplicial complex} if it is the image of a geometric vertex embedding of a finite simplicial complex.
\end{definition}

Given a shape $M\subset \R^n$, every unit vector $v$ corresponds to a height function in direction $v$,
\begin{align*}
	h_v & :M \to \R                      \\
	h_v & :x\mapsto \langle x, v\rangle.
\end{align*}
where $\langle\cdot,\cdot\rangle$ denotes the inner product.
The resulting degree-$k$ persistence diagram computed by filtering $M$ by the sub-level sets of $h_v$, is denoted $\Dgm_k(h^M_v)$.
This diagram records geometric information from the perspective of direction $v$. As $v$ changes, the persistent homology classes track geometric features in $M$. \kts{ The key insight behind the persistent homology transform (PHT) is
	that by considering the persistent homology from every direction we obtain a retain complete information about the shape.}
%

 


\begin{definition}
	The \emph{Persistent Homology Transform} $\PHT$ of a 
	\prs{finite embedded geometric simplicial complex} $M$ is the map $\PHT(M): S^{d-1} \to \Dgm^d$ with
	\[
		\PHT(M): v \mapsto \left(\Dgm_0(h_v^M),\Dgm_1(h_v^M), \ldots, \Dgm_{d-1}(h_v^M)\right)
	\]
	where $h_v^M:M\to \R$, $h_v^M(x)=\langle x, v\rangle$ is the height function on $M$ in direction $v$.
	Letting the set $M$ vary gives us the map
	\[
		\PHT : \CS(\R^d) \to C^0(S^{d-1},\Dgm^d),
	\]
	where $C^0(S^{d-1},\Dgm^d)$ is the set of continuous functions from $S^{d-1}$ to $\Dgm^d$, the latter being equipped with some Wasserstein $p$-distance.
\end{definition}

The persistent homology transform is a complete descriptor of constructible sets; for $M_1, M_2 \subset \R^d$, $\PHT(M_1)=\PHT(M_2)$ implies  $M_1=M_2$ as subsets of $\R^d$. This was originally proved in~\cite{turner2014persistent} for $\R^2$ and $\R^3$, and then the more general proof was given in~\cite{curry2018many} and independently in~\cite{ghrist2018persistent}. Finite embedded geometric simplicial complexes are examples of constructible sets.
%


We can define a metric on the space of persistent homology transforms by considering the appropriate integrals of Wasserstein distances in each direction. We obtain a different distance for each $p\in [1,\infty]$

\begin{definition}
	For $p\in [1,\infty)$, and sets $M_1, M_2 \subset \R^d$ \prs{whose PHTs are defined, the}  $p$-PHT distance between $M_1, M_2$ is defined as
	$$d_p^{\PHT}(M_1, M_2) = \left(\int_{S^{d-1}} W_p(\Dgm(h_v^{M_1}),\Dgm(h_v^{M_2}))^p \,dv\right)^{1/p}.$$
\end{definition}

We can use the cellular Wasserstein stability result to prove a stability theorem for the persistent homology transforms of different vertex embeddings of the same simplicial complex.

\begin{theorem}
	Fix a simplicial complex $K$ with vertex set $V$. Let  $C_{p,d}=2\omega_{d-2}\int_0^{\frac{\pi}{2}}\cos^p(\theta)  \sin^{d-2}(\theta)\, d\theta$ where $\omega_{d-2}$ is the volume of the unit sphere $S^{d-2}$ and
	$$C_K=\max_{\text{vertices }v\in K}|\{\sigma\in K|v\in \overline{\sigma}\}|.$$
	Let $f,g:K \to \R^d$ be different geometric vertex embeddings of $K$. Then
	$$d_{p}^{\PHT}(f(K), g(K))\leq \left(C_K C_{p,d} \sum_{v\in V} \|f(v)-g(v)\|_2^p\right)^{1/p}.$$
\end{theorem}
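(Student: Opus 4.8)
The plan is to apply the cellular Wasserstein stability theorem (Theorem~\ref{thm:simpbound}) separately in each direction $v\in S^{d-1}$ and then integrate over the sphere. First I would fix $v$ and replace the PL height function $h_v^{f(K)}$ on the geometric realization by the monotone piecewise constant function $F_v:K\to\R$ on the abstract complex given by $F_v(\sigma)=\max_{w\in\sigma^{(0)}}\langle f(w),v\rangle$, where $\sigma^{(0)}$ denotes the vertex set of $\sigma$; define $G_v$ analogously from $g$. Because $\langle\cdot,v\rangle$ is affine it attains its maximum over each closed simplex $f(\sigma)$ at a vertex, so $\{x\in f(K):\langle x,v\rangle\le\alpha\}$ deformation retracts, compatibly with inclusions, onto the subcomplex $\{\sigma:F_v(\sigma)\le\alpha\}$. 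Hence $\Dgm_k(h_v^{f(K)})=\Dgm_k(F_v)$ for every $k$ (and likewise for $g$), exactly as in the remark on PL versus piecewise constant functions in Section~\ref{sec:preliminaries}, and Theorem~\ref{thm:simpbound} gives
\[
W_p\bigl(\Dgm(h_v^{f(K)}),\Dgm(h_v^{g(K)})\bigr)^p\;\le\;\|F_v-G_v\|_p^p\;=\;\sum_{\sigma\in K}\bigl|F_v(\sigma)-G_v(\sigma)\bigr|^p.
\]

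The second step is a pointwise-in-$v$ bound on each summand. From the elementary inequality $|\max_i a_i-\max_i b_i|\le\max_i|a_i-b_i|$ applied with $a_w=\langle f(w),v\rangle$ and $b_w=\langle g(w),v\rangle$, I get $|F_v(\sigma)-G_v(\sigma)|^p\le\sum_{w\in\sigma^{(0)}}|\langle f(w)-g(w),v\rangle|^p$. Integrating over $S^{d-1}$, interchanging the finite sums with the integral, and re-indexing $\sum_{\sigma\in K}\sum_{w\in\sigma^{(0)}}$ as $\sum_{w\in V}\sum_{\sigma\ni w}$ (where the inner count is at most $C_K$ by definition of $C_K$), I obtain
\[
d_p^{\PHT}(f(K),g(K))^p\;\le\;\int_{S^{d-1}}\|F_v-G_v\|_p^p\,dv\;\le\;C_K\sum_{w\in V}\int_{S^{d-1}}\bigl|\langle f(w)-g(w),v\rangle\bigr|^p\,dv.
\]

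Finally I would evaluate $I(u):=\int_{S^{d-1}}|\langle u,v\rangle|^p\,dv$ for $u=f(w)-g(w)$. By rotational invariance of the surface measure, $I(u)=\|u\|_2^p\,I(e)$ for any unit vector $e$; parametrising $S^{d-1}$ by the angle $\theta\in[0,\pi]$ from $e$ together with a point of $S^{d-2}$, so that $\langle e,v\rangle=\cos\theta$ and the measure disintegrates as $\omega_{d-2}\sin^{d-2}\theta\,d\theta$ in the angular variable, gives $I(e)=\omega_{d-2}\int_0^\pi|\cos\theta|^p\sin^{d-2}\theta\,d\theta=2\omega_{d-2}\int_0^{\pi/2}\cos^p\theta\sin^{d-2}\theta\,d\theta=C_{p,d}$. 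Substituting back and taking $p$-th roots yields the stated bound. I expect the main obstacle to be the first step — confirming carefully that the PL height filtration and the piecewise constant function $F_v$ induce the same persistence module, so that all sublevel sets can be taken to be closed subcomplexes and Theorem~\ref{thm:simpbound} applies verbatim — together with the discipline of not collapsing $\langle f(w)-g(w),v\rangle$ via Cauchy--Schwarz too early, since it is precisely the directional average of $|\langle\cdot,v\rangle|^p$ that produces the sharp constant $C_{p,d}$, whereas bounding $|\langle u,v\rangle|\le\|u\|_2$ at the outset would replace it by the total surface area of $S^{d-1}$.
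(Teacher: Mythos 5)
Your proposal is correct and follows essentially the same route as the paper: replace the PL height filtration by the vertex-max piecewise constant function on $K$, apply Theorem~\ref{thm:simpbound} pointwise in $v$, bound the per-simplex difference via the max-difference inequality and the $C_K$ multiplicity, and finish with the rotationally invariant integral giving $C_{p,d}$. The only cosmetic difference is that you bound $\max_{w}|\cdot|^p$ by $\sum_{w}|\cdot|^p$ and then re-index, whereas the paper charges each simplex to the maximizing vertex directly; both yield the same $C_K$ constant.
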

\begin{proof}
	Define functions $k^f_w:K\to \R$ by setting
	$$k^f_w([v_0, \ldots v_n]) = \max\{h_w(f(v_0), h_w(f(v_1)), \ldots h_w(f(v_n))\},$$ and $k^g_w:K\to \R$ \kt{similarly}.
	As discussed in \cite{curry2018many}, the sublevel set filtrations of $k_w^f$ and $h_w^{f(K)}$ have the same persistent homology. Similarly, $k^g_w$ and $h_w^{g(K)}$ give the same sub-level set persistent homology.
	By Theorem~\ref{thm:simpbound}, we know that
	$$W_p(\Dgm(k^f_w), \Dgm(k^g_w))^p\leq \sum_{\Delta\in K}|k_w^f(\Delta)-k_w^g(\Delta)|^p.$$
	For any finite set $X$,
	$$\left|\max_{x\in X}f(x)-\max_{y\in X} g(y)\right| \leq \max_{x\in X}\left|f(x)-g(x)\right|$$
	which implies
	$$\sum_{\sigma\in K}\left|k_w^f(\sigma)-k_w^g(\sigma)\right|^p\leq \sum_{\sigma\in K}\max_{v\in \sigma}\left\{\left|k_w^f(v)-k_w^g(v)\right|^p\right\}\leq C_K\sum_{v\in V} \left|k_w^f(v)-k_w^g(v)\right|^p.$$

	But $k_w^f(v)=\langle w,f(v)\rangle$ and $k_w^g(v)=\langle w, g(v)\rangle$ which implies $$\sum_{\sigma\in K}\left|k_w^f(\sigma)-k_w^g(\sigma)\right|^p\leq C_K\sum_{v\in V} \left|\langle w, f(v)-g(v)\rangle\right|^p.$$

	\begin{align*}
		d_p^{\PHT}(f(K), g(K))^p & =\int_{S^{d-1}} W_p(\Dgm(h^f_w), \Dgm(h^g_w))^p\, dw                      \\
		                         & \leq \int_{S^{d-1}}C_K \sum_{v\in V} |\langle w,f(v)-g(v)\rangle |^p\,dw  \\
		                         & \leq C_K \sum_{v\in V} \int_{S^{d-1}} |\langle w,f(v)-g(v)\rangle |^p\,dw
	\end{align*}
	Let $e_1$ denote the vector with $1$ in the first coordinate and $0$ in every other coordinate. For each $u\in \R^d$ we have $ \int_{S^{d-1}} |\langle w,u\rangle |^p\,dw=\|u\|_2^p \int_{S^{d-1}}|\langle w, e_1\rangle |^p\,dw.$ This implies
	\begin{align*}
		d_p^{\PHT}(f(K), g(K))^p & \leq C_K \sum_{v\in V} \| f(v)-g(v)\|_2^p\int_{S^{d-1}}|\langle w, e_1\rangle |^p\,dw                                                                        \\
		                         & \kts{=C_K \sum_{v\in V} \| f(v)-g(v)\|_2^p 2 \,\int_{\theta=0}^{\pi} \int_{\{w\in S^{d-1}: \langle w, e_1\rangle=\cos(\theta)\}} |\cos(\theta)|^p \,dw \,dt} \\
	\end{align*}
	\kts{For $\theta\in  (0,\pi)$ the set $\{w\in S^{d-1}: \langle w, e_1 \rangle=\cos(\theta)\}$ is a scaled version of $S^{d-2}$ with radius $\sin(\theta)$. We can use the symmetry between $\theta$ and $\pi-\theta$ to remove the need for absolute value signs. The inequality thus becomes	}
	\begin{align*}
		d_p^{\PHT}(f(K), g(K))^p & \leq C_K \sum_{v\in V} \| f(v)-g(v)\|_2^p \,2 \omega_{d-2} \,\int_{0}^{\frac{\pi}{2}} \cos(\theta)^p \sin^{d-2}(\theta) \,d\theta. \\
	\end{align*}

\end{proof}
In particular $C_{p,3}=\frac{4\pi}{p+1}$, $C_{p,2}\leq 2$ for all $p$, and $C_{1,d}=\frac{2\omega_{d-2}}{d-1}$.

\subsection{Stability results for Rips complexes}\label{sec:rips}
One of the most common uses of persistent homology is that of filtrations of Rips complexes over point clouds (where a \emph{point cloud} is just a finite set of points in Euclidean space). The goal of this section is to bound the change in $\Dgm(\VR(\Xp))$ as the underlying point cloud $\Xp$ changes, so we first find the appropriate distance between point clouds. We will first state the definition of the Wasserstein distances between measures. This views each point cloud as a sum of point masses. In order for this distance to be defined we require that the point clouds have same cardinality. If there were different numbers of points then the total masses of the measures are different and no transport plan can be formed between the two measures. \prs{While one could normalize the measure to avoid this issue, the resulting problem would involve comparing the limits of persistence diagrams, which is a largely open problem and beyond the scope of this paper.}

\begin{definition}\label{def:ptwasserstein}
	Let  $\Xpi$ and $\Ypi$ be two finite point clouds in $\R^d$ and assume $|\Xpi| =|\Ypi|$.  Define the \emph{point cloud Wasserstein distance} between them as
	$$\Was{p}^{\text{point cloud}}({\Xpi},{\Ypi}) = \inf\limits_{\phi}  \left(\sum\limits_{\Gpt\in \Xpi } ||\Gpt-\phi(\Gpt)||^\power\right )^{\frac{1}{\power} }.$$
	where $\phi$ is a bijection.
\end{definition}

Since we are dealing with finite sets \prs{of equal cardinality} this definition is equivalent to the classical Wasserstein distance between the measures $\mu_0$ and $\mu_1$ where $\mu_i=\sum_{x\in \Xp_i}\delta_x$. \prs{This equivalence is well-known as the $p$-Wasserstein distance may be expressed as a linear program, e.g. \cite{peyre2019computational}. This linear program is known to have an integral solution\footnote{This follows from a reduction to a minimum cost maximum flow problem.}  see \cite[Chapter 13]{schrijver2003combinatorial}. }

Before stating our stability results let us first recall some basic definitions.

\begin{definition}\label{def:vrcomplex}
	Given a point cloud $\Xp \subset \R^d$, the Vietoris-Rips complex \kts{at length scale $\delta$} is the simplicial complex $\VR_\delta(\Xp)$ where a $k$-simplex is a subsets of $k+1$ points $\{\Gpt_1,\ldots, \Gpt_{k+1}\}$ such that $||\Gpt_i - \Gpt_j||_2\leq \delta$ for all $i,j=1,\ldots, k+1$.
\end{definition}

\kts{We can define a Vietoris-Rips function such that the sublevel set for value $\delta$ is the Vietoris-Rips complex at length scale $\delta$.}

\begin{definition}\label{def:vrfiltration}
	\kts{The Vietoris-Rips function of a point cloud $\Xp$ is the function $\VR(\Xp):K \to \R$ where $K$ is the compete simplical complex over $\Xp$ and $\VR(\Xp)([v_{i_0}, v_{i_1}, \ldots v_{i_k}])=\max_{j,l}\{\|v_{i_j}-v_{i_l}\|_2\}$.}
\end{definition}


\begin{theorem}
	Fix $M>0$. For all $\power\geq 1$, for all $k$, and all point clouds $\Xpi, \Ypi$ with $|\Xpi|,|\Ypi|=M$ we have
	$$\Was{\power}(\Dgm_k(\VR(\Xpi)),\Dgm_k(\VR(\Ypi)) ) \leq 2\binom{M-1}{k}^{1/p}\Was{p}^{\text{point cloud}}({\Xpi},{\Ypi}).$$
	Furthermore, $$\Was{\power}(\Dgm(\VR(\Xpi)),\Dgm(\VR(\Ypi)) ) \leq 2^{M/p+1} \Was{\power}^{\text{point cloud}}(\Xpi,\Ypi).$$
\end{theorem}
\begin{proof}
	Let $\phi:\Xpi\to \Ypi$ be a bijection which achieves the minimum of
	$$W_p^{\text{point cloud}}(\Xpi, \Ypi)=\inf\limits_{\phi}  \left(\sum\limits_{\Gpt \in \Xpi } ||\Gpt-\phi(\Gpt)||^\power\right )^{\frac{1}{\power}}.$$
	Relabel the points in $\Xpi=\{x_1, \ldots x_M\}$ and $\Ypi=\{y_1, \ldots y_M\}$ so that $\phi(x_i)=y_i$.
	Let $K$ be the complete simplicial complex on $M$ vertices $\{v_1, \ldots v_M\}$.
	\kts{	Define functions $f, g: K \to \R$ by setting
		$f([v_{i_0}, v_{i_1}, \ldots v_{i_k}])=\VR(\Xpi)([x_{i_0},x_{i_1}, \ldots x_{i_k}]) $ and
		$g([v_{i_0}, v_{i_1}, \ldots v_{i_k}])=\VR(\Ypi)([y_{i_0}, y_{i_1}, \ldots y_{i_k}]$. That is we are precomposing the Vietoris-Rips functions with the appropriate bijection of vertices. By construction $\Dgm_k(f)=\Dgm_k(\Xpi)$ and $\Dgm_k(g)=\Dgm_k(\Ypi)$.
	}
	Suppose for now that $k\geq 1$. Then
	\begin{align*}
		|f([v_{i_0}, v_{i_1}, \ldots v_{i_k}])-g([v_{i_0}, v_{i_1}, \ldots v_{i_k}])|
		 & = |(\max_{j, l}\{ \|x_{i_j}- x_{i_l}\|\}- \max_{j,l}\{ \|y_{i_j}- y_{i_l}\|\}| \\
		 & \leq \max_{j, l} |\|x_{i_j}-x_{i_l}\|- \|y_{i_j}-y_{i_l}\||.
	\end{align*}

	By the triangle inequality $|\|x_{i_j}-x_{i_l}\|- \|y_{i_j}-y_{i_l}\||<\|x_{i_j}-y_{i_j}\|+ \|x_{i_l}-y_{i_l}\|$. This implies $|f([v_{i_0}, v_{i_1}, \ldots v_{i_k}])-g([v_{i_0}, v_{i_1}, \ldots v_{i_k}])|\leq \max_{j\neq l} \|x_{i_j}-y_{i_j}\|+ \|x_{i_l}-y_{i_l}\|\leq 2 \max_{j} \|x_{i_j}-y_{i_j}\|$.

	Since $K$ is the complete simplicial complex over $M$ vertices, each edge $[v_i, v_j]$ appears in $\binom{M-2}{k-1}$ $k$-simplices (we only need to decide which extra $k-1$ vertices to include).

	Using the cellular stability theorem,
	\begin{align*}
		\Was{\power} & (\Dgm_k(\VR(\Xpi)),\Dgm_k(\VR(\Ypi)) )^p                                                                                                                                                                          \\
		             & \leq \sum_{[v_{i_0},  \ldots v_{i_k}]} |f([v_{i_0},  \ldots v_{i_k}])-g([v_{i_0},  \ldots v_{i_k}])|^p +\sum_{[v_{i_0},  \ldots v_{i_{k+1}}]} |f([v_{i_0},  \ldots v_{i_k}])-g([v_{i_0},  \ldots v_{i_{k+1}}])|^p \\
		             & \leq \sum_{i} \binom{M-2}{k-1} 2^p\|x_i-y_i\|^p+ \sum_{i} \binom{M-2}{k} 2^p\|x_i-y_i\|^p                                                                                                                         \\
		             & \leq 2^p\binom{M-1}{k}\Was{p}^{\text{point cloud}}({\Xpi},{\Ypi})^p                                                                                                                                               \\
	\end{align*}

	For $k=0$ the calculations are even easier as the vertex values are all $0$. 
	\begin{align*}
		\Was{\power} (\Dgm_0(\VR(\Xpi)),\Dgm_0(\VR(\Ypi)) )^p & \leq \sum_{i<j} |f([v_i, v_j])-g([v_i, v_j])|^p   \\
		                                                      & =\sum_{i<j} |\|x_i-x_j\|-\|y_i-y_j\||^p           \\
		                                                      & \leq \sum_{i} (2\|x_i-y_i\|)^p                  \\
		                                                      & =2^p\Was{p}^{\text{point cloud}}({\Xpi},{\Ypi})^p
	\end{align*}

	To prove the second part, we again use the cellular stability theorem to compute
	\begin{align*}
		\Was{\power}(\Dgm_k(\VR(\Xpi)),\Dgm_k(\VR(\Ypi)) )^p & \leq \sum_{k=1}^M \sum_{[v_{i_0}, v_{i_1}, \ldots v_{i_k}]} |f([v_{i_0}, v_{i_1}, \ldots v_{i_k}])-g([v_{i_0}, v_{i_1}, \ldots v_{i_k}])|^p \\
		                                                     & \leq \sum_{k=0}^M \binom{M}{k} 2^p \sum_{i} \|x_i-y_i\|^p                                                                                   \\
		                                                     & = 2^p2^M \Was{p}^{\text{point cloud}}({\Xpi},{\Ypi})^p.                                                                                     \\
	\end{align*}
\end{proof}

\section{Consequences for  topological summaries}\label{sec:consequences}
\prsx{Summary statistics based on topological invariants are commonly known as \emph{topological summaries}. A basic example of a topological summary is the space of persistence diagrams equipped with any of the $p$-Wasserstein metrics.} One drawback of persistence diagrams is that they do not form a Hilbert space, and so it is difficult to use them with standard statistical or machine learning techniques. A common approach to overcome this is to consider topological summaries which are derived from persistence diagrams but are more amenable to additional processing. Often these can contain the same information as persistence diagrams (such as persistence images) or strictly less information (such as Betti curves), so we can consider them as the output of a function from the space of persistence diagrams. 

A desirable property of a topological summary is stability. In the literature, \prsx{stability results for topological summaries} are often stated in terms of the
	$p$-Wasserstein distance of the corresponding persistence diagrams, typically using $p=1$. The $1$-Wasserstein distance provides the largest upper bound on the distance between topological summaries amongst the Wasserstein metrics, and cannot be controlled by $p$-Wasserstein distances for $p>1$.
	In contrast, most bounds on the distance between
persistence diagrams generated from of geometric input, such as point clouds or metric spaces, are upper bounds the bottleneck
distance in terms of geometric quantities such as Hausdorﬀ(-type) distances.
	As the bottleneck distance is a lower bound for the $p$-Wasserstein distance, for all $p$, bottleneck stability results are not easily combined with 1-Wasserstein stability results for topological summaries.

Here we apply our results to give a bound directly on the 1-Wasserstein distances for  topological summaries where the condition of $d(T(X),T(Y))\leq C_T W_1(X,Y)$ for all persistence diagrams $X,Y$ has already been established. This includes
\begin{enumerate}
	\item sliced Wasserstein kernel, $C_T=2\sqrt{2}$ \cite{carriere2017sliced},
	\item persistent images, $C_T=1$ \cite{adams2017persistence},
	\item persistent scale space, $C_T=1$ \cite{reininghaus2015stable,kusano2016persistence},
	\item Betti curves \cite{robins2006betti, saadat2021topological},
	\item learned/optimized representations \cite{hofer2017deep,hofer2019learning},
	\item persistent homology rank function \cite{robins2016principal}, $C_T=1$ (Corollary \ref{cor:rank}).
\end{enumerate}
While many of the results have stability results in terms of the input, the result below provides a common setting which we believe will be useful for new summaries. Additionally, the stability some summaries, e.g. the rank function, lack a previous stability result in the literature.

To this end, we first examine  how  Lipschitz stability relates to linear representations of persistence diagrams, providing necessary conditions. Finally, we also consider persistence landscapes \cite{bubenik2015statistical} which are one of the most common forms of non-linear representations. We prove negative Lipschitz stability results for all $L^p$ function norms of persistence landscapes where $p<\infty$.
We begin with the general result:

	\begin{corollary}
		\prsx{Let $(\mathcal{X},d)$ be a  metric space and  $T: \D \to \mathcal{X}$ a function.} 
		Suppose that there exists a $C_T>0$ such that $d(T(X),T(Y))\leq C_T W_1(X,Y)$ for all persistence diagrams $X,Y$.
		If $f,g$ are monotone functions over cellular complex $K$ then 
		$d(T(\Dgm(f)), T(\Dgm(g)))\leq C_T\|f-g\|_{1}$.
	\end{corollary}

The proof follows directly from the earlier stability results in this paper.

\subsection{Linear representations of persistence diagrams}\label{sec:linear}
\prs{ Linear representations of persistence diagrams are a common form of topological summaries. Viewing persistence diagrams as measures over the plane, i.e.  assuming a  persistence diagram $X$ has off-diagonal points $\{x_i\}$, its corresponding \emph{persistence measure} is $\mu_X=\sum_{i}\delta_{x_i}$ where $\delta_{x_i}$ is the Dirac measure on $x_i$. Then any function from the plane to some Banach space gives a resulting linear representation via integration over the persistence measure.}


\begin{definition}
	Let $\mathcal{B}$ be a Banach space. A \emph{linear representation} is a function $\Phi:\D\to \mathcal{B}$ such that $\Phi(X)=\int_{\R^{2+}}f(x)d\mu_X(x)=\sum_{x_i \in X} f(x_i)$ for some $f:\overline{\R}^{2+}\to \mathcal{B}$.
\end{definition}

As these topological summaries lie in Banach spaces, often even Hilbert spaces, the number of statistical methods available for analysis increases. Often these constructions of linear representations are justified as maintaining relevant persistence homology information because of stability with respect to $1$-Wasserstein distances of the original persistence diagrams.

Lipschitz stability with respect to $1$-Wasserstein distance for persistence diagrams has been shown for a number of linear representations, see for example persistence scale space kernel \cite{reininghaus2015stable} and persistence images \cite{chung2019persistence}. Related theoretic bounds for distances between general linear representations may be found in Divol and Lacombe \cite{divol2020understanding}. \kts{They give necessary and sufficient conditions for when linear representations are continuous with respect to Wasserstein distances. They also show that for a $1$-Lipschitz function $f$ that goes to zero on the diagonal, the corresponding linear representations are $1$-Lipschitz with respect to the $1$-Wasserstein distance. In this section, we complete the story about Lipschitz stability for linear representations into general Banach spaces. Despite the overlap in material with \cite{divol2020understanding}, we include both directions for the sake of completeness and to provide a more elementary proof.} 

Note that all the $L_q$ metrics over $\R^{2+}\cup \Delta$ are bi-Lipschitz equivalent up to a slight change in constant. \kts{This implies that the choice of $q$ will not affect whether there is Lipschitz stability (though it may affect the Lipschitz constant).} \prsx{The following theorem generalizes and unifies a number of previous results, e.g. \cite{reininghaus2015stable,chung2019persistence}.}  


%
%


\begin{theorem}\label{thm:LipschitzLinear}  Let $\Phi:\D\to \mathcal{B}$ be a non-trivial linear representation constructed via  $f:\overline{\R}^{2+} \to \mathcal{B}$.
		Then $\Phi$ is Lipschitz continuous with respect to $W_p$ with constant $C$ if and only if $p=1$ and $f$ is Lipschitz continuous with constant $C$ and $0$ on the set $\{(t,t) \mid t\in \R\}$.
\end{theorem}

\begin{proof}
	Let us first assume that  $\Phi:\D\to \mathcal{B}$ is Lipschitz continuous with respect to $W_p$ with constant $C$.  \kt{We will show that $p=1$ by way of contradiction.} Let $x\in \R^{2+}$ with $f(x)\neq 0$. Set $X$ to be the persistence diagram consisting of $k$ copies of $x$, and $Y$ the persistence diagram containing no off-diagonal points. Now
	$$W_p(X,Y)=(k \|x-\Delta\|_p^p)^{1/p}=k^{1/p}\|x-\Delta\|_p.$$
	In contrast
	$\|\Phi(X)-\Phi(Y)\|_{\mathcal{B}}=\|\Phi(X)\|_{\mathcal{B}}=\|k \cdot f(x)\|_{\mathcal{B}}=k\| f(x)\|_{\mathcal{B}}.$

	By assumption we have $$k\| f(x)\|_{\mathcal{B}}\leq C k^{1/p}\|x-\Delta\|_p$$ for all $k$ which clearly creates a contradiction if $p>1$.

	From now on we set $p=1$. To show $f$ is Lipschitz, let $x,y\in \R^{2+}$ and set $X$ and $Y$ to be the persistence diagrams no off-diagonal elements except $x$ or $y$ respectively.
	We have
	\begin{align*}
		\|f(x)-f(y)\|_{\mathcal{B}}=\|\Phi(x)-\Phi(y)\|_{\mathcal{B}}\leq CW_1(X,Y)\leq C\|x-y\|_1
	\end{align*}
	where the first inequality follows by assumption and the second because $\phi(x)=y$ determines a matching (which may not necessarily be optimal).   
	
	Now set $X$ to be the persistence diagram with $x\in \R^{2+}$ the only off-diagonal point and $Y$ the persistence diagram containing no off-diagonal points. By definition $\Phi(Y)=0$, and $\Phi(X)=f(x)$. Thus
	$$\|f(x)\|=\|\Phi(X) -\Phi(Y)\|\leq C W_1(X,Y)=C\|x-\Delta\|_1$$
which implies that $f(s,t) \to 0$ as $|s-t|\to 0$. Note that if we have a persistence diagram containing just the diagonal point $(t,t)$ we have $\|\Phi(X)=\|f((t,t)\|\leq CW_1(X,Y)=0$.

	To prove the other direction, suppose $\|f(x)-f(y)\|\leq C\|x-y\|_1$ for all $x,y\in \R^{2+}$ and $f((t,t))=0$ for all $t$. Let $X,Y$ be persistence diagrams and let $\Mch$ be a matching between them.
	\begin{align*}
		\|\Phi(X)-\Phi(Y)\| & =\left\|\sum_{x\in X} f(x)-\sum_{y\in Y} f(y)\right\| \\
		                    & \leq \left\|\sum_{(x,y)\in \Mch}f(x)-f(y)\right\|     \\ &\leq \sum_{(x,y)\in \Mch}\left\|f(x)-f(y)\right\|\\
		                    & \leq \sum_{(x,y)\in \Mch}C \left\|x-y\right\|_1
	\end{align*}
	This holds for all matchings $\Mch$ and hence $\|\Phi(X)-\Phi(Y)\|\leq C W_1(X,Y)$.
\end{proof}

\begin{definition}
	We define the $k$-th dimensional persistent homology rank function corresponding to the filtration $K$ to be
	\begin{align*}
		\beta_k(K) & : \R^{2+} \to \Z                                    \\
		(a,b)      & \mapsto \rk  (\Hg_k(K_a) \rightarrow \Hg_k(K_b))
	\end{align*}
	where $K_a$ is the filtration at $a$.
	We define a \emph{weighting function} as any real valued function $\psi:\R^{2+} \rightarrow \R_{\geq 0}$ such $\psi$ is non-zero on a non-zero measure of $\R^{2+}$.  
	
	We define the $(L^q, \psi)$-\emph{weighted norm} as
	\begin{align}
		||g||_{q, \psi}  =\left(  \int_{x<y}|g|^q \psi(x,y)\, dx\, dy\right)^{\frac{1}{q}}.
	\end{align}

\end{definition}
		We can define the Banach space of real valued functions over $\R^{2+}$ using this weighted $q$-norm. Denote this Banach space by $L^q(\R^{2+}, \psi)$. One option of the weight function is $\psi(t) = e^{-t}$ which was used in \cite{robins2016principal}. Here we will completely characterise the weight functions that will allow for Lipschitz stability with respect to Wasserstein distances.

Before we prove the theorem characterising when we have Lipschitz stability we first need to recall a standard measure theory result about Lebesgue density.

\begin{lemma}\label{lem:>T}[Corollary 1.5 in Chapter 3 of \cite{stein2009real}]
Fix $\alpha\in (0,1)$. If $A\subset \R$ has positive measure then there exists $a\in A$ such that for all sufficiently small $r$, the measure of $A\cap[a-r, a+r]$ is at least $2r\alpha$.
\end{lemma}

\begin{theorem}\label{cor:rank}
Persistent homology rank functions with the $(L^q, \psi)$ weighted metric are Lipschitz continuous with respect to the $p$-Wasserstein distances between diagrams, with Lipschitz constant $C$,  if and only if $q=p=1$ and $\psi$ satisfies the following:
	\begin{itemize}
	\item	$\int_x^\infty \psi(x,t)\, dt \leq C$ for almost all $x$, and	
	\item $\int_{-\infty}^{y}\psi(t,y)\, dt\leq C$ for almost all $y$. 
	\end{itemize}
\end{theorem}

\begin{proof}
	We can see that the persistent homology rank function is a linear representation of persistence diagrams. Define $f:\R^{2+} \to L^q(\R^{2+}, \psi)$ by $f(a,b)=1_{\{(x,y):a\leq x\leq y\leq b\}}$. Then we can observe that for any diagram $X$ we have $\beta(X)=\sum_{x\in X} f(x)$.

Suppose that the persistent homology rank functions with $(L^q, \psi)$ weighted metric are Lipschitz continuous  with respect to the $p$-Wasserstein distances between diagrams, with Lipschitz constant $C$. Since persistent homology rank functions are linear representations we can apply Theorem \ref{thm:LipschitzLinear} which implies $p=1$. 
	
	Define the function $\rho:\R \to \R\cup \{\infty\}$ by $\rho(x)=\int_x^\infty \psi(x,y)\, dy$. 
	

	Let $x_1\leq x_2 \leq y_0$ and consider the persistent homology rank functions constructed from the persistence diagrams containing a single off-diagonal point $(x_1, y_0)$ and $(x_2, y_0)$ respectively. Note that for large $y_0$, the $1$-Wasserstein distance between these persistence diagrams is $x_2-x_1$. The $\psi$-weighted $q$-distance between these persistent homology rank functions is
	$$\|f(x_1, y_0)-f(x_2, y_0)\|_{q, \psi}=\left(\int_{x_1}^{x_2} \left(\int_{x}^{y_0} \psi(x,y) \,dy \right)\,dx \right)^{1/q}.$$
	From our Lipschitz assumption we see that 
	$$\left(\int_{x_1}^{x_2} \left(\int_{x}^{y_0} \psi(x,y) \,dy \right)\,dx \right)^{1/q}\leq C|x_2-x_1|.$$
As this holds for all large $y$ we can take the limit on both sides and see that 
$\left(\int_{x_1}^{x_2} \rho(x)\, dx \right)^{1/q}\leq C|x_2-x_1|$
and thus 
\begin{equation}\label{eq:C,q}
\int_{x_1}^{x_2} \rho(x)\, dx\leq C^q|x_2-x_1|^q
\end{equation} for all $x_1\leq x_2$. 
We first will show that this is  only possible if $q=1$. As $\psi$ is a weighing function there exists a threshold $T>0$ such that $A^{T}=\{x\mid \rho(x)>T\}$ has positive measure. Fix $\alpha\in (0,1)$. By Lemma \ref{lem:>T} there exists $a\in \R$  such that for all sufficiently small $r$we have the 
$\int_{a-r}^{a+r} \rho(x)\, dx\geq 2r\alpha$. Combining with \eqref{eq:C,q} we get
$$2r\alpha\leq C^q2^q r^q$$ for all sufficiently small $r$. As $C$ is a constant, this is clearly impossible if $q>1$.

Suppose now that the set of $x$ with $\rho(x)> C$ has positive measure. This implies that there is a threshold $T>C$ such that $A^T=\{x\mid \rho(x)>T\}$ has positive measure. Choose $\alpha<1$ such that $T\alpha>C$. By Lemma \ref{lem:>T} there exists $a$ such that  $$\int_{a-r}^{a+r} \rho(x)\, dx \geq 2r \alpha T > 2rC$$ for all sufficiently small $r$. This contradicts \eqref{eq:C,q} so we can conclude that $\rho(x)\leq C$ for almost all $x$. A symmetric argument reversing the roles of $x$ and $y$ implies that $\int_{-\infty}^{y}\psi(t,y)\, dt\leq C$ for almost all $y$.
	


To show the other direction, set $p=q=1$ and suppose weighting function $\psi$ satisfies 	
\begin{itemize}
	\item	$\int_x^\infty \psi(x,t)\, dt \leq C$ for almost all $x$, and	
	\item $\int_{-\infty}^{y}\psi(t,y)\, dt\leq C$ for almost all $y$. 
\end{itemize}

By Theorem  \ref{thm:LipschitzLinear} it suffices to show that for $f:\R^{2+} \to L^q(\R^{2+}, \psi)$ by $f(a,b)=1_{\{(x,y):a\leq x\leq y\leq b\}}$ we have $$\|f(x_1, y_1)-f(x_2,y_2)\|_{1,\psi}\leq C(|x_1-x_2| + |y_1-y_2|).$$

	Without loss of generality assume $x_1\leq x_2$. There are three cases to consider 
	\begin{itemize}
	\item[(i)] $x_1\leq y_1\leq x_2\leq y_2$
	\item[(ii)]$x_1\leq x_2\leq y_1\leq y_2$
	\item[(iii)]$x_1\leq x_2\leq y_2 \leq y_1$
	\end{itemize}

In all three cases $$\|f(x_1, y_1)-f(x_2, y_2)\|_{1, \psi}\leq \|1_{\{(x,y)\in \R^{2+}\mid x_1\leq  x \leq x_2\}} + 1_{\{(x,y)\in \R^{2+}\mid \min\{y_1, y_2\}\leq y\leq \max\{ y_1,y_2\}\}}\|_{1,\psi}.$$ This follows from the the containment $\supp(f(x_1, y_1))-f(x_2, y_2))\subset M(x_1,x_2,y_1,y_2)$ where $M(x_1,x_2,y_1,y_2)=\{(x,y)\in \R^{2+}\mid x_1\leq  x \leq x_2\}\cup \{(x,y)\in \R^{2+}\mid \min\{y_1, y_2\}$ which is illustrated in Figure \ref{fig:rankfunctions}.

\begin{figure}[htbp]
\centering

\begin{subfigure}[b]{0.3\textwidth}
\centering
\begin{tikzpicture}[scale=0.75]
    \def\x{0.5}
    \def\y{1.5}
    \def\xx{2.5}
    \def\yy{3.5}
    
    \draw[->] (-1,0) -- (5.5,0) node[right] {$x$};
    \draw[->] (0,-1) -- (0,5.5) node[above] {$y$};
    
    \draw[thick, dashed] (-1,-1) -- (5,5) node[above right] {$y = x$};
    
    \fill[gray!50, opacity=0.8] (\x,\x) -- (\x,\y) -- (\y,\y) -- cycle;
    
    \fill[gray!50, opacity=0.8] (\xx,\xx) -- (\xx,\yy) -- (\yy,\yy) -- cycle;
    
    \draw[black, thick] (\x,\x) -- (\x,\y) -- (\y,\y) -- cycle;
    \draw[black, thick] (\xx,\xx) -- (\xx,\yy) -- (\yy,\yy) -- cycle;
    
    \fill (\x,\x) circle (1.5pt);
    \fill (\y,\y) circle (1.5pt);
    \fill (\xx,\xx) circle (1.5pt);
    \fill (\yy,\yy) circle (1.5pt);
    
    \node[below right] at (\x,\x) {$x_1$};
    \node[below right] at (\y,\y) {$y_1$};
    \node[below right] at (\xx,\xx) {$x_2$};
    \node[below right] at (\yy,\yy) {$y_2$};
    
\end{tikzpicture}

\caption{$\supp(f(x_1, y_1))-f(x_2, y_2))$ for $x_1\leq y_1\leq x_2\leq y_2$ (case (i))}
\label{fig:sub1}
\end{subfigure}
\hfill
\begin{subfigure}[b]{0.3\textwidth}
\centering

\begin{tikzpicture}[scale=0.75]
    \def\x{0.5}
    \def\xx{1.5}
    \def\y{2.5}
    \def\yy{3.5}
    
    \draw[->] (-1,0) -- (5.5,0) node[right] {$x$};
    \draw[->] (0,-1) -- (0,5.5) node[above] {$y$};
    
    \draw[thick, dashed] (-1,-1) -- (5,5) node[above right] {$y = x$};
    
    \fill[gray!50, opacity=0.8] (\x,\x) -- (\x,\y) -- (\xx,\y) -- (\xx,\xx) -- cycle;
    
    \fill[gray!50, opacity=0.8] (\xx,\y) -- (\xx,\yy) -- (\yy,\yy) -- (\y,\y) -- cycle;
    
    \draw[black, thick] (\x,\x) -- (\x,\y) -- (\y,\y) -- cycle;
    \draw[black, thick] (\xx,\xx) -- (\xx,\yy) -- (\yy,\yy) -- cycle;
    \draw[black, thick] (\xx,\xx) -- (\xx,\y) -- (\y,\y) -- cycle;
    
    \fill (\x,\x) circle (1.5pt);
    \fill (\xx,\xx) circle (1.5pt);
    \fill (\y,\y) circle (1.5pt);
    \fill (\yy,\yy) circle (1.5pt);
    
    \node[below right] at (\x,\x) {$x_1$};
    \node[below right] at (\xx,\xx) {$x_2$};
    \node[below right] at (\y,\y) {$y_1$};
    \node[below right] at (\yy,\yy) {$y_2$};
    
\end{tikzpicture}

\caption{$\supp(f(x_1, y_1))-f(x_2, y_2))$ for $x_1\leq x_2\leq y_1\leq y_2$ (case (ii))}
\label{fig:sub2}
\end{subfigure}
\hfill
\begin{subfigure}[b]{0.3\textwidth}
\centering

\begin{tikzpicture}[scale=0.75]
    \def\x{0.5}
    \def\xx{1.5}
    \def\yy{2.5}
    \def\y{3.5}
    
    \draw[->] (-1,0) -- (5.5,0) node[right] {$x$};
    \draw[->] (0,-1) -- (0,5.5) node[above] {$y$};
    
    \draw[thick, dashed] (-1,-1) -- (5,5) node[above right] {$y = x$};
    
    
    \fill[gray!50, opacity=0.8] (\x,\x) -- (\x,\y) -- (\xx,\y) -- (\xx,\xx) -- cycle;
    
    \fill[gray!50, opacity=0.8] (\xx,\yy) -- (\xx,\y) -- (\y,\y) -- (\yy,\yy) -- cycle;
    
    \draw[black, thick] (\x,\x) -- (\x,\y) -- (\y,\y) -- cycle; 
    \draw[black, thick] (\xx,\xx) -- (\xx,\yy) -- (\yy,\yy) -- cycle; 
    
    \fill (\x,\x) circle (1.5pt);
    \fill (\xx,\xx) circle (1.5pt);
    \fill (\yy,\yy) circle (1.5pt);
    \fill (\y,\y) circle (1.5pt);
    
    \node[below right] at (\x,\x) {$x_1$};
    \node[below right] at (\xx,\xx) {$x_2$};
    \node[below right] at (\yy,\yy) {$y_2$};
    \node[below right] at (\y,\y) {$y_1$};
    
\end{tikzpicture}

\caption{$\supp(f(x_1, y_1))-f(x_2, y_2))$ for $x_1\leq x_2\leq y_2\leq y_1$ (case (iii))}
\label{fig:sub3}
\end{subfigure}

\vspace{1cm}

\begin{subfigure}[b]{0.3\textwidth}
\centering

\begin{tikzpicture}[scale=0.75]
    \def\x{0.5}
    \def\y{1.5}
    \def\xx{2.5}
    \def\yy{3.5}
    
    \draw[->] (-1,0) -- (5.5,0) node[right] {$x$};
    \draw[->] (0,-1) -- (0,5.5) node[above] {$y$};
    
    \draw[thick, dashed] (-1,-1) -- (5,5) node[above right] {$y = x$};
    
    \fill[gray!50, opacity=0.8] (-1,\y) -- (\y,\y) -- (\yy,\yy) -- (-1,\yy) -- cycle;
    
    \fill[gray!50, opacity=0.8] (\x,\x) -- (\x,5) -- (\xx,5) -- (\xx,\xx) -- cycle;
    
    \draw[black, thick] (-1,\y) -- (\y,\y) -- (\yy,\yy) -- (-1,\yy);
    \draw[black, thick] (\x,\x) -- (\x,5);
    \draw[black, thick] (\xx,\xx) -- (\xx,5);
    \draw[black, thick] (\x,\x) -- (\xx,\xx);
    
    \fill (\x,\x) circle (1.5pt);
    \fill (\y,\y) circle (1.5pt);
    \fill (\xx,\xx) circle (1.5pt);
    \fill (\yy,\yy) circle (1.5pt);
    
    \node[below right] at (\x,\x) {$x_1$};
    \node[below right] at (\y,\y) {$y_1$};
    \node[below right] at (\xx,\xx) {$x_2$};
    \node[below right] at (\yy,\yy) {$y_2$};

\end{tikzpicture}

\caption{$M(x_1,x_2,y_1,y_2)$ in case (i)}
\label{fig:sub4}
\end{subfigure}
\hfill
\begin{subfigure}[b]{0.3\textwidth}
\centering

\begin{tikzpicture}[scale=0.75]
    \def\x{0.5}
    \def\xx{1.5}
    \def\y{2.5}
    \def\yy{3.5}
    
    \draw[->] (-1,0) -- (5.5,0) node[right] {$x$};
    \draw[->] (0,-1) -- (0,5.5) node[above] {$y$};
    
    \draw[thick, dashed] (-1,-1) -- (5,5) node[above right] {$y = x$};
    
    \fill[gray!50, opacity=0.8] (-1,\y) -- (\y,\y) -- (\yy,\yy) -- (-1,\yy) -- cycle;
    
    \fill[gray!50, opacity=0.8] (\x,\x) -- (\x,5) -- (\xx,5) -- (\xx,\xx) -- cycle;
    
    \draw[black, thick] (-1,\y) -- (\y,\y) -- (\yy,\yy) -- (-1,\yy);
    \draw[black, thick] (\x,\x) -- (\x,5);
    \draw[black, thick] (\xx,\xx) -- (\xx,5);
    \draw[black, thick] (\x,\x) -- (\xx,\xx);
    
    \fill (\x,\x) circle (1.5pt);
    \fill (\xx,\xx) circle (1.5pt);
    \fill (\y,\y) circle (1.5pt);
    \fill (\yy,\yy) circle (1.5pt);
    
    \node[below right] at (\x,\x) {$x_1$};
    \node[below right] at (\xx,\xx) {$x_2$};
    \node[below right] at (\y,\y) {$y_1$};
    \node[below right] at (\yy,\yy) {$y_2$};

\end{tikzpicture}

\caption{$M(x_1,x_2,y_1,y_2)$ in case (ii)}
\label{fig:sub5}
\end{subfigure}
\hfill
\begin{subfigure}[b]{0.3\textwidth}
\centering
\begin{tikzpicture}[scale=0.75]
    \def\x{0.5}
    \def\xx{1.5}
    \def\y{2.5}
    \def\yy{3.5}
    
    \draw[->] (-1,0) -- (5.5,0) node[right] {$x$};
    \draw[->] (0,-1) -- (0,5.5) node[above] {$y$};
    
    \draw[thick, dashed] (-1,-1) -- (5,5) node[above right] {$y = x$};
    
    \fill[gray!50, opacity=0.8] (-1,\y) -- (\y,\y) -- (\yy,\yy) -- (-1,\yy) -- cycle;
    
    \fill[gray!50, opacity=0.8] (\x,\x) -- (\x,5) -- (\xx,5) -- (\xx,\xx) -- cycle;
    
    \draw[black, thick] (-1,\y) -- (\y,\y) -- (\yy,\yy) -- (-1,\yy);
    \draw[black, thick] (\x,\x) -- (\x,5);
    \draw[black, thick] (\xx,\xx) -- (\xx,5);
    \draw[black, thick] (\x,\x) -- (\xx,\xx);
    
    \fill (\x,\x) circle (1.5pt);
    \fill (\xx,\xx) circle (1.5pt);
    \fill (\y,\y) circle (1.5pt);
    \fill (\yy,\yy) circle (1.5pt);
    
    \node[below right] at (\x,\x) {$x_1$};
    \node[below right] at (\xx,\xx) {$x_2$};
    \node[below right] at (\y,\y) {$y_2$};
    \node[below right] at (\yy,\yy) {$y_1$};

\end{tikzpicture}

\caption{$M(x_1,x_2,y_1,y_2)$ in case (iii)}
\label{fig:sub6}
\end{subfigure}

\caption{Illustration showing $\supp(f(x_1, y_1))-f(x_2, y_2))\subset M(x_1,x_2,y_1,y_2)$ where $M(x_1,x_2,y_1,y_2)=\{(x,y)\in \R^{2+}\mid x_1\leq  x \leq x_2\}\cup \{(x,y)\in \R^{2+}\mid \min\{y_1, y_2\}$ in all three cases.}
\label{fig:rankfunctions}
\end{figure}

We thus can use this to bound $\|f(x_1, y_1)-f(x_2, y_2)\|_{1, \psi}$ with 
\begin{align*}
\|f(x_1, y_1)-f(x_2, y_2)\|_{1, \psi}
&\leq \int_{x_1}^{x_2}\left(\int_x^\infty \psi(x,y) \, dy\right) \, dx + \int_{\min\{y_1, y_2\}}^{\max\{y_1, y_2\}}\left(\int_{-\infty}^y \psi(x,y) \, dx\right) \, dy\\
&\leq C|x_1-x_2|+C|y_1-y_2|.
\end{align*}
%
%
\end{proof}

\subsection{Persistence Landscapes are not Lipschitz stable}\label{sec:landscape}

Persistence landscapes~\cite{bubenik2015statistical} were among the first functionals proposed for persistence diagrams and remain among the most popular in practice.
\begin{definition}
	The \emph{persistence landscape} of persistence module $M$ is the function $\lambda:\mathbb{N}\times\R \to \R$ defined by
	$$\lambda(k, t)(M)=\sup\{h\geq 0\mid \rk(M(t-h \leq t+h))\geq k).$$
	We call $\lambda(k, \cdot)(M)$ the $k$-th persistence landscape.
\end{definition}

The $L^q$ distance between persistence landscapes is defined as the sum over $k$ of the $L^q$ distances between the $k$-th persistence landscape.
Let $pl_k(f)$ denote the $k$-th persistence landscape of the sublevel persistence diagram for $f$.

There is a form of bottleneck stability for persistence landscapes, see \cite{bubenik2015statistical},
but unlike the linear functionals in the previous subsection, there is no Lipschitz nor even H\"older stability with respect to the $p$-Wasserstein distances ($p<\infty$) of their corresponding persistence diagrams.

\begin{theorem}\label{thm:pl}
	Let $(\D, W_p)$ denote the space of persistence diagrams with the $W_p$ metric and let $(PL, L_q)$ denote the space of persistence landscapes with the $L^q$ metric. For all $q\in [1, \infty)$, the function $pl: (\D, W_p) \to (PL, L^q)$ which sends each persistence diagram to its corresponding persistence landscape is \emph{not} H\"older continuous.
\end{theorem}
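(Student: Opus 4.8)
The plan is to exhibit, for any candidate H\"older exponent $\alpha\in(0,1]$ and constant $C>0$, a pair of persistence diagrams $X,Y$ for which the ratio $\|pl(X)-pl(Y)\|_{L^q}\big/\,W_p(X,Y)^\alpha$ is arbitrarily large. The key mechanism is that the $L^q$ norm of a single ``triangle'' (tent) landscape of a point at height $\ell$ above the diagonal scales like $\ell^{1+1/q}$, whereas moving that point toward the diagonal costs only $W_p \sim \ell$; so a single tall bar already gives a superlinear discrepancy, and by sending $\ell\to\infty$ (or by using many small bars) we defeat any fixed H\"older estimate.

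Concretely, first I would record the elementary computation: if $X=\{(b,d)\}$ is a diagram with a single off-diagonal point of length $\ell=d-b$ (plus the diagonal), then $pl_1(X)$ is the tent function of height $\ell/2$ supported on an interval of width $\ell$, and all higher landscapes vanish, so $\|pl(X)\|_{L^q}^q = \int_0^{\ell/2} 2\,(\ell/2 - s)^q\, ds \cdot (\text{const})$, which evaluates to a constant times $\ell^{q+1}$; hence $\|pl(X)\|_{L^q} = c_q\,\ell^{1+1/q}$ for an explicit $c_q>0$. Meanwhile $W_p(X,Y)$, with $Y$ the empty (diagonal-only) diagram, equals the $\ell_p$-distance from $(b,d)$ to the diagonal, which is $\ell/2$ up to a dimensional constant. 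Then for the pair $X_\ell$ (one bar of length $\ell$) and $Y=\emptyset$ we get
\[
\frac{\|pl(X_\ell)-pl(Y)\|_{L^q}}{W_p(X_\ell,Y)^\alpha} \;=\; \frac{c_q\,\ell^{1+1/q}}{(c'\,\ell)^\alpha} \;=\; c''\,\ell^{\,1+1/q-\alpha},
\]
and since $1+1/q-\alpha > 0$ for every $\alpha\le 1$ and $q<\infty$, letting $\ell\to\infty$ shows no H\"older bound with exponent $\alpha$ and any constant $C$ can hold. (If one objects that this uses unbounded diagrams, an entirely parallel argument uses $N$ disjoint bars each of small length $\ell$: the landscape $L^q$ norm is $N^{1/q}c_q\ell^{1+1/q}$ while $W_p = N^{1/p}c'\ell$, and choosing the bars disjointly and then scaling $N,\ell$ appropriately — e.g.\ $\ell$ fixed small and $N\to\infty$ when $q<p$, or a combined limit in general — again makes the ratio blow up; I would present whichever version is cleanest, likely the single tall bar together with a remark covering the bounded case.)

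The main obstacle, such as it is, is bookkeeping rather than conceptual: one must make sure the comparison diagram $Y$ and the matching realizing $W_p$ are chosen so that $W_p(X_\ell,Y)$ is genuinely of order $\ell$ (not smaller), which is immediate here because the only matching option for an off-diagonal point is to send it to the diagonal; and one must confirm the landscape computation including the contribution of all $\lambda(k,\cdot)$ — trivial here since a single bar produces only $\lambda(1,\cdot)$. I would also note explicitly that H\"older continuity with exponent $\alpha$ would in particular force such a bound on the one-parameter family $\{X_\ell\}$, so the single divergent family suffices to contradict it. No appeal to the structure theory is needed beyond the definitions of $W_p$ (Definition~\ref{def:wasserstein}) and of the persistence landscape.
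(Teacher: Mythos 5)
Your argument is correct, and the mechanism is the same one the paper exploits: the $L^q$ norm of a single tent landscape scales like $\ell^{1+1/q}$ in the bar length $\ell$, which is superlinear for $q<\infty$, so it outpaces any power $W_p^\alpha$ with $\alpha\le 1$. The only difference is the choice of diagram pair. You compare a single bar of length $\ell$ to the empty diagram and let $\ell\to\infty$, so that $W_p(X_\ell,\emptyset)\sim\ell\to\infty$ as well; the paper instead compares the bar $(0,a)$ to the nearby bar $(0,a-r)$, computes that the landscape difference is a trapezoid of $L^q$-norm at least $r(a/2-r)^{1/q}$ while $W_p(X,Y)=r$ exactly, and then sends $a\to\infty$ with $r$ held fixed. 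The paper's variant thus makes the failure slightly more pointed, since the Wasserstein distance between the counterexample pair stays bounded (indeed can be taken arbitrarily small and fixed) while the landscape distance blows up, whereas in your example both quantities diverge and the conclusion rests only on their rates. Both refute a global H\"older estimate equally well and neither gives a local statement, since in both families the diagrams escape every $W_p$-ball; so the choice is a matter of emphasis, not substance. One small note: the paper also records a Porism translating this into a statement about monotone functions on a simplicial complex, which your argument could feed into with the same two-cell example used there.
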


\begin{proof}
	Let $X$ and $Y$ be the persistence diagrams with one off-diagonal point at $(0, a)$ and $(0, a-r)$ respectively, where $r\ll a$.
	The first persistence landscapes for $pl(X)$ and $pl(Y)$  are both a triangle function. These are centred at $a/2$ and $(a-r)/2$ respectively.
	We can compute that $pl(X)-pl(Y)$ is a trapezium shape:
	$$(pl(X)-pl(Y))(t)= \begin{cases}
			t   & \text{for } 2t\in[(a-r)/2, a/2] \\
			r   & \text{for } t\in [a/2, a-r]     \\
			a-t & \text{for } t\in[a-r, a]        \\
			0   & \text{otherwise.}
		\end{cases}$$

	When $a\gg r$, the contribution of the integral over $[a/2, a-r]$ will dominate the $L^q$ distance between $pl(X)$ and $pl(Y)$.
	The function distance is bounded below by
	$$\|pl(X)-pl(Y)\|_q  > \left(\int_{a/2}^{a-r} r^q \, dt\right)^{1/q}  =r(a/2-r)^{1/q}$$

	We also know that for $r\ll a$ the optimal matching between $X$ and $Y$ sends the point at $(0,a)$ to $(0, a-r)$ and hence $W_p(X,Y)=r$ for all $p\in [1,\infty]$.
	For a H\"older stability result to hold we would need there to be $\alpha, C>0$ such that
	$\|pl(X)-pl(Y)\|_q\leq CW_p(X,Y)^\alpha$ for all $X,Y\in \D$.

	This would imply
	$$r(a/2-r)^{1/q}\leq Cr^\alpha \qquad \text{for all } a\gg r.$$

	By setting $r$ small and $a$ large we can make the left hand side arbitrarily large and the right hand side arbitrarily small which provides a contradiction regardless of the choice of $q, C$ and $\alpha$. This means there cannot be any H\"older continuity when $q\neq \infty$.
\end{proof}

\begin{corollary}
	Let $M$ be a simplicial complex containing at least one edge. Let $(X, L^p)$ denote the space of monotone functions over $M$ with the $L^p$ metric. For all $p,q\in [1, \infty)$, the function $PL: (X, d_{L^p}) \to (pl,L^q)$ which sends each function to the persistence landscape of its sublevel set filtration is \emph{not} H\"older continuous.
\end{corollary}

\begin{proof}
	We prove the result by creating an example of a pair of functions that produce the persistence diagrams in Theorem \ref{thm:pl} as a \prsx{subdiagram.} Fix an edge $[x_1, x_2]$ in $M$. Set $f([x_1])=0$, $f([x_2])=0$, $g([x_1,x_2])=a-r$ and $g(\tau)=a$ for all other cells $\tau\in M$. 	\prsx{Let all other simplices take a constant value larger than $2a$ for both $f$ and $g$.} Note that $\|f-g\|_p=r$ for all $r\in [0,a]$. 
	The persistence diagram of the sublevel set filtrations of $f$ and $g$ \prsx{until $a$} are the $X$ and $Y$ used in the proof of Theorem \ref{thm:pl} \prsx{and the same for both diagrams outside of this subdiagram. More precisely,  they contain the essential classes of $M$ which appear at the chosen constant.} The remainder of the proof are the same inequalities as before.
\end{proof}

\begin{remark} It is worth noting that \cite{bubenik2015statistical} does have a limited version of Wasserstein stability using \cite{cohen2010lipschitz}. This corollary states that for $X$ a triangulable, compact metric space that implies bounded degree-$k$ total persistence for some real number $k \geq 1$, and $f, g$ two tame Lipschitz functions we have
	$$\left\|PL(f)- PL(g)\right\|_p \leq C\left\| f-g\right\|_\infty^{\frac{p-k}{p}}$$
	for all $p \geq k$, where
	$$C = C_{X,k}\|f\|_\infty (Lip(f)^k + Lip(g)^k) + C_{X,k+1} \frac{1}{p+1} (Lip(f)^{k+1} + Lip(g)^{k+1}).$$
	See Section \ref{sec:existing_limitations} for some limitations in terms of $k$ and $C_{X,k}$.
\end{remark}

\section{Funding}
KT is the recipient of an Australian Research Council Discovery Early Career Award (project number DE200100056) funded by the Australian Government.

\section{Declarations}
\subsection{Conflicts of Interest}
On behalf of all authors, the corresponding author states that there is no conflict of interest.
\bibliographystyle{plain}
\bibliography{wasserstein}

\begin{thebibliography}{10}

\bibitem{adams2017persistence}
Henry Adams, Tegan Emerson, Michael Kirby, Rachel Neville, Chris Peterson,
  Patrick Shipman, Sofya Chepushtanova, Eric Hanson, Francis Motta, and Lori
  Ziegelmeier.
\newblock Persistence images: A stable vector representation of persistent
  homology.
\newblock {\em The Journal of Machine Learning Research}, 18(1):218--252, 2017.

\bibitem{arnal2024wasserstein}
Charles Arnal, David Cohen-Steiner, and Vincent Divol.
\newblock Wasserstein convergence of {\v{c}}ech persistence diagrams for
  samplings of submanifolds.
\newblock 2024.

\bibitem{bauer2014induced}
Ulrich Bauer and Michael Lesnick.
\newblock Induced matchings of barcodes and the algebraic stability of
  persistence.
\newblock In {\em Proceedings 30th Annual Symposium on Computational Geometry},
  page 355. ACM, 2014.

\bibitem{bleile2022persistent}
Bea Bleile, Ad{\'e}lie Garin, Teresa Heiss, Kelly Maggs, and Vanessa Robins.
\newblock The persistent homology of dual digital image constructions.
\newblock In {\em Research in Computational Topology 2}, pages 1--26. Springer,
  2022.

\bibitem{bobrowski2019random}
Omer Bobrowski and Goncalo Oliveira.
\newblock Random {\v{c}}ech complexes on riemannian manifolds.
\newblock {\em Random Structures \& Algorithms}, 54(3):373--412, 2019.

\bibitem{bubenik2015statistical}
Peter Bubenik.
\newblock Statistical topological data analysis using persistence landscapes.
\newblock {\em The Journal of Machine Learning Research}, 16(1):77--102, 2015.

\bibitem{bubenik2015metrics}
Peter Bubenik, Vin de~Silva, and Jonathan Scott.
\newblock Metrics for generalized persistence modules.
\newblock {\em Foundations of Computational Mathematics}, 15(6):1501--1531,
  2015.

\bibitem{Bubenik_2014}
Peter Bubenik and Jonathan~A. Scott.
\newblock Categorification of persistent homology.
\newblock {\em Discrete and Computational Geometry}, 51(3):600--627, Jan 2014.

\bibitem{carriere2017sliced}
Mathieu Carriere, Marco Cuturi, and Steve Oudot.
\newblock Sliced wasserstein kernel for persistence diagrams.
\newblock In {\em Proceedings of the 34th International Conference on Machine
  Learning-Volume 70}, pages 664--673. JMLR. org, 2017.

\bibitem{chazal2012structure}
Fr{\'e}d{\'e}ric Chazal, Vin De~Silva, Marc Glisse, and Steve Oudot.
\newblock The structure and stability of persistence modules.
\newblock {\em arXiv preprint arXiv:1207.3674}, 2012.

\bibitem{chazal2014persistence}
Fr{\'e}d{\'e}ric Chazal, Vin De~Silva, and Steve Oudot.
\newblock Persistence stability for geometric complexes.
\newblock {\em Geometriae Dedicata}, 173(1):193--214, 2014.

\bibitem{chen2011diffusion}
Chao Chen and Herbert Edelsbrunner.
\newblock Diffusion runs low on persistence fast.
\newblock In {\em 2011 International Conference on Computer Vision}, pages
  423--430. IEEE, 2011.

\bibitem{chen2019topological}
Chao Chen, Xiuyan Ni, Qinxun Bai, and Yusu Wang.
\newblock A topological regularizer for classifiers via persistent homology.
\newblock In {\em The 22nd International Conference on Artificial Intelligence
  and Statistics}, pages 2573--2582. PMLR, 2019.

\bibitem{chung2024altered}
Moo~K Chung, Tahmineh Azizi, Jamie~L Hanson, Andrew~L Alexander, Seth~D Pollak,
  and Richard~J Davidson.
\newblock Altered topological structure of the brain white matter in maltreated
  children through topological data analysis.
\newblock {\em Network Neuroscience}, 8(1):355--376, 2024.

\bibitem{chung2019persistence}
Yu-Min Chung and Austin Lawson.
\newblock Persistence curves: A canonical framework for summarizing persistence
  diagrams.
\newblock {\em arXiv preprint arXiv:1904.07768}, 2019.

\bibitem{CohEdeHar2007}
David Cohen-Steiner, Herbert Edelsbrunner, and John Harer.
\newblock Stability of persistence diagrams.
\newblock {\em Discrete and Computational Geometry}, 37:103--120, 2007.

\bibitem{cohen2010lipschitz}
David Cohen-Steiner, Herbert Edelsbrunner, John Harer, and Yuriy Mileyko.
\newblock Lipschitz functions have $l_p$-stable persistence.
\newblock {\em Foundations of computational mathematics}, 10(2):127--139, 2010.

\bibitem{cohen2006vines}
David Cohen-Steiner, Herbert Edelsbrunner, and Dmitriy Morozov.
\newblock Vines and vineyards by updating persistence in linear time.
\newblock In {\em Proceedings of the twenty-second annual symposium on
  Computational geometry}, pages 119--126, 2006.

\bibitem{Crawley_Boevey_2015}
William Crawley-Boevey.
\newblock Decomposition of pointwise finite-dimensional persistence modules.
\newblock {\em Journal of Algebra and Its Applications}, 14(05):1550066, Mar
  2015.

\bibitem{curry2018many}
Justin Curry, Sayan Mukherjee, and Katharine Turner.
\newblock How many directions determine a shape and other sufficiency results
  for two topological transforms.
\newblock {\em arXiv preprint arXiv:1805.09782}, 2018.

\bibitem{delfinado1993incremental}
Cecil Jose~A Delfinado and Herbert Edelsbrunner.
\newblock An incremental algorithm for betti numbers of simplicial complexes.
\newblock In {\em Proceedings of the ninth annual symposium on Computational
  geometry}, pages 232--239, 1993.

\bibitem{divol2020understanding}
Vincent Divol and Th{\'e}o Lacombe.
\newblock Understanding the topology and the geometry of the space of
  persistence diagrams via optimal partial transport.
\newblock {\em Journal of Applied and Computational Topology}, pages 1--53,
  2020.

\bibitem{dunaeva2016classification}
Olga Dunaeva, Herbert Edelsbrunner, Anton Lukyanov, Michael Machin, Daria
  Malkova, Roman Kuvaev, and Sergey Kashin.
\newblock The classification of endoscopy images with persistent homology.
\newblock {\em Pattern Recognition Letters}, 83:13--22, 2016.

\bibitem{EdeHar2010}
Herbert Edelsbrunner and John Harer.
\newblock {\em Computational Topology}.
\newblock American Mathematical Society, 2010.

\bibitem{edelsbrunner2000topological}
Herbert Edelsbrunner, David Letscher, and Afra Zomorodian.
\newblock Topological persistence and simplification.
\newblock In {\em Proceedings 41st annual symposium on foundations of computer
  science}, pages 454--463. IEEE, 2000.

\bibitem{gameiro2016continuation}
Marcio Gameiro, Yasuaki Hiraoka, and Ippei Obayashi.
\newblock Continuation of point clouds via persistence diagrams.
\newblock {\em Physica D: Nonlinear Phenomena}, 334:118--132, 2016.

\bibitem{ghrist2018persistent}
Robert Ghrist, Rachel Levanger, and Huy Mai.
\newblock Persistent homology and euler integral transforms.
\newblock {\em Journal of Applied and Computational Topology}, 2(1-2):55--60,
  2018.

\bibitem{hofer2017deep}
Christoph Hofer, Roland Kwitt, Marc Niethammer, and Andreas Uhl.
\newblock Deep learning with topological signatures.
\newblock In {\em Advances in Neural Information Processing Systems}, pages
  1634--1644, 2017.

\bibitem{hofer2019learning}
Christoph~D Hofer, Roland Kwitt, and Marc Niethammer.
\newblock Learning representations of persistence barcodes.
\newblock {\em Journal of Machine Learning Research}, 20(126):1--45, 2019.

\bibitem{klingenberg1995riemannian}
Wilhelm Klingenberg.
\newblock {\em Riemannian geometry}, volume~1.
\newblock Walter de Gruyter, 1995.

\bibitem{kusano2016persistence}
Genki Kusano, Yasuaki Hiraoka, and Kenji Fukumizu.
\newblock Persistence weighted gaussian kernel for topological data analysis.
\newblock In {\em International Conference on Machine Learning}, pages
  2004--2013, 2016.

\bibitem{leygonie2019framework}
Jacob Leygonie, Steve Oudot, and Ulrike Tillmann.
\newblock A framework for differential calculus on persistence barcodes.
\newblock {\em arXiv preprint arXiv:1910.00960}, 2019.

\bibitem{nauleau2022topological}
Florent Nauleau, Fabien Vivodtzev, Thibault Bridel-Bertomeu, Heloise
  Beaugendre, and Julien Tierny.
\newblock Topological analysis of ensembles of hydrodynamic turbulent flows an
  experimental study.
\newblock In {\em 2022 IEEE 12th Symposium on Large Data Analysis and
  Visualization (LDAV)}, pages 1--11. IEEE, 2022.

\bibitem{peyre2019computational}
Gabriel Peyr{\'e}, Marco Cuturi, et~al.
\newblock Computational optimal transport: With applications to data science.
\newblock {\em Foundations and Trends{\textregistered} in Machine Learning},
  11(5-6):355--607, 2019.

\bibitem{poulenard2018topological}
Adrien Poulenard, Primoz Skraba, and Maks Ovsjanikov.
\newblock Topological function optimization for continuous shape matching.
\newblock In {\em Computer Graphics Forum}, volume~37, pages 13--25. Wiley
  Online Library, 2018.

\bibitem{reininghaus2015stable}
Jan Reininghaus, Stefan Huber, Ulrich Bauer, and Roland Kwitt.
\newblock A stable multi-scale kernel for topological machine learning.
\newblock In {\em Proceedings of the IEEE conference on computer vision and
  pattern recognition}, pages 4741--4748, 2015.

\bibitem{robins2006betti}
Vanessa Robins.
\newblock Betti number signatures of homogeneous poisson point processes.
\newblock {\em Physical Review E—Statistical, Nonlinear, and Soft Matter
  Physics}, 74(6):061107, 2006.

\bibitem{robins2016principal}
Vanessa Robins and Katharine Turner.
\newblock Principal component analysis of persistent homology rank functions
  with case studies of spatial point patterns, sphere packing and colloids.
\newblock {\em Physica D: Nonlinear Phenomena}, 334:99--117, 2016.

\bibitem{robins2011theory}
Vanessa Robins, Peter~John Wood, and Adrian~P Sheppard.
\newblock Theory and algorithms for constructing discrete morse complexes from
  grayscale digital images.
\newblock {\em IEEE Transactions on pattern analysis and machine intelligence},
  33(8):1646--1658, 2011.

\bibitem{saadat2021topological}
Ameer Saadat-Yazdi, Rayna Andreeva, and Rik Sarkar.
\newblock Topological detection of alzheimer’s disease using betti curves.
\newblock In {\em Interpretability of Machine Intelligence in Medical Image
  Computing, and Topological Data Analysis and Its Applications for Medical
  Data: 4th International Workshop, iMIMIC 2021, and 1st International
  Workshop, TDA4MedicalData 2021, Held in Conjunction with MICCAI 2021,
  Strasbourg, France, September 27, 2021, Proceedings 4}, pages 119--128.
  Springer, 2021.

\bibitem{schrijver2003combinatorial}
Alexander Schrijver et~al.
\newblock {\em Combinatorial optimization: polyhedra and efficiency},
  volume~24.
\newblock Springer, 2003.

\bibitem{7789621}
Lee~M. Seversky, Shelby Davis, and Matthew Berger.
\newblock On time-series topological data analysis: New data and opportunities.
\newblock In {\em 2016 IEEE Conference on Computer Vision and Pattern
  Recognition Workshops (CVPRW)}, pages 1014--1022, 2016.

\bibitem{skraba2017randomly}
Primoz Skraba, Gugan Thoppe, and D~Yogeshwaran.
\newblock Randomly weighted $ d-$ complexes: Minimal spanning acycles and
  persistence diagrams.
\newblock {\em arXiv preprint arXiv:1701.00239}, 2017.

\bibitem{stein2009real}
Elias~M Stein and Rami Shakarchi.
\newblock {\em Real analysis: measure theory, integration, and Hilbert spaces}.
\newblock Princeton University Press, 2009.

\bibitem{turner2020medians}
Katharine Turner.
\newblock Medians of populations of persistence diagrams.
\newblock {\em Homology, Homotopy and Applications}, 22(1):255--282, 2020.

\bibitem{turner2014frechet}
Katharine Turner, Yuriy Mileyko, Sayan Mukherjee, and John Harer.
\newblock Fr{\'e}chet means for distributions of persistence diagrams.
\newblock {\em Discrete \&amp; Computational Geometry}, 52(1):44--70, 2014.

\bibitem{turner2014persistent}
Katharine Turner, Sayan Mukherjee, and Doug~M Boyer.
\newblock Persistent homology transform for modeling shapes and surfaces.
\newblock {\em Information and Inference: A Journal of the IMA}, 3(4):310--344,
  2014.

\bibitem{tymochko2020using}
Sarah Tymochko, Elizabeth Munch, Jason Dunion, Kristen Corbosiero, and Ryan
  Torn.
\newblock Using persistent homology to quantify a diurnal cycle in hurricanes.
\newblock {\em Pattern Recognition Letters}, 133:137--143, 2020.

\bibitem{vidal2019progressive}
Jules Vidal, Joseph Budin, and Julien Tierny.
\newblock Progressive wasserstein barycenters of persistence diagrams.
\newblock {\em IEEE transactions on visualization and computer graphics},
  26(1):151--161, 2019.

\bibitem{zomorodian2005computing}
Afra Zomorodian and Gunnar Carlsson.
\newblock Computing persistent homology.
\newblock {\em Discrete \&amp; Computational Geometry}, 33(2):249--274, 2005.

\end{thebibliography}
\appendix

\section{Proof for Section~\ref{sec:preliminaries}}\label{sec:powers}
\begin{lemma}[Lemma \ref{lem:powers}]
	Let $X,Y$ be persistence diagrams such that $\sum_{x\in X}\|x-\Delta\|<\infty$ and $\sum_{y\in Y}\|(y)-\Delta\|<\infty$.  For any  $1\leq \power'< \power$,   $\Was{\power}(X,Y)\leq \Was{\power'}(X,Y)$.
\end{lemma}
\begin{proof}
	Let us first prove the useful inequality that for $t_1, \ldots t_k \geq 0$ and $0<a < 1$ we have
	\begin{equation}\label{eq:useful}
		(t_1+t_2+\ldots t_k)^a \leq t_1^a +t_2^a+\ldots t_k^a.
	\end{equation}
	This inequality can be proved by simple induction on the number of summands. The base case is trivial.
	For the induction step consider the function $f_a(t)=1 + t^a -(1+t)^a$ (with $t\geq 0$) and observe that $f(0)$.
	As $f_a'(t)=a(t^{a-1} - (1+t))^{a-1}\geq 0$ we infer that $f(t)\geq 0$. To prove the inductive step we can assume $t_{k+1}>0$ (as otherwise it is trivial) and we calculate
	\begin{align*}
		\frac{(t_1+t_2+\ldots t_k+t_{k+1})^a}{t_{k+1}^a} & = \left(\frac{(t_1+t_2+\ldots t_k)}{t_{k+1}}+1\right)^a    \\
		                                                 & \leq \frac{(t_1+t_2+\ldots t_k+t_{k+1})^a}{t_{k+1}^a}  + 1 \\
		                                                 & \leq  \frac{t_1^a+t_2^a+\ldots t_k^a}{t_{k+1}^a}  + 1.
	\end{align*}

	To use \eqref{eq:useful} we will need to restrict to persistence diagrams constructed from finitely many off-diagonal points. To this end let $0<\epsilon<1$ and choose $X^\epsilon\subset X$ be a persistence diagrams containing finitely many off-diagonal points such that $\sum_{x\in X\backslash X^\epsilon}\|x-\Delta\|<\epsilon$. This is always possible by our assumption that $\sum_{x\in X}\|x-\Delta\|<\infty$.  Similarly construct $Y^\epsilon$. By the triangle inequality we have  $|W_q(X,Y)-W_q(X^\epsilon, Y^\epsilon)|\leq 2\epsilon$ for all $q$.

	Let us now fix a matching $\Mch \subset \{X^\epsilon\cup \Delta\}\times\{ Y^\epsilon\cup \Delta\}$, and construct new multisets $\widehat{X}$ and $\widehat{Y}$ where we replace the copies of $\Delta$ with the corresponding locations on the diagonal which are closest to the point in to other matched off-diagonal point in the other persistence diagram. We then can construct $\widehat{\Mch}\subset\widehat{X}\times\widehat{Y}$ such that for each $p$, the $p$-costs for $\Mch$ and $\widehat{\Mch}$ are the same. Note that the number of pairs $(x,y)\in \widehat{\Mch}$ is finite, and $\frac{p'}{p}\leq 1$ and thus we can apply inequality \eqref{eq:useful} to show
	\begin{align*}
		\cost_p(\Mch) & = \left(\sum\limits_{(x,y)\in\widehat{ \Mch}}
		|\death(x)- \death(y)|^\power + |\birth(x)- \birth(y)|^\power
		\right)^{\frac{1}{\power}}                                                                                                                                \\
		              & = \left(\sum\limits_{(x,y)\in\widehat{ \Mch}}
		|\death(x)- \death(y)|^\power + |\birth(x)- \birth(y)|^\power
		\right)^{\frac{\power'}{\power'\power}}                                                                                                                   \\
		              & \leq \left( \sum\limits_{(x,y)\in \widehat{\Mch}} |\death(x)- \death(y)|^{\power'} + |\birth(x)- \birth(y)|^{\power'} \right)^{1/\power'} \\
		              & =\cost_{p'}(\Mch)
	\end{align*}
	As $\Mch$ was an arbitrary matching this implies that $\cost_p(\Mch)\leq  \cost_{p'}(\Mch)$ for $p'<p$ and every matching between $X^\epsilon$ and $Y^\epsilon$. As we can couple the elements of the set of costs of all matchings, when we take the infimum over the set of all matchings we get $W_{p}(X^\epsilon,Y^\epsilon)\leq W_p (X^\epsilon,Y^\epsilon)$. The proof is completed by taking the limit as $\epsilon$ goes to zero.
\end{proof}

\section{Proof of Proposition \ref{prop:persistence_algorithm}}\label{appendix:persistence}
Here we give a proof sketch of the proposition. It follows directly from the algorithm for computing persistence diagrams so it will be obvious to experts, but we include the relevant details and references here. The key idea throughout is to construct the filtration incrementally, adding one simplex at a time. If the filtration is a total order, then this is unique. If it is a partial order, the insertion order will depend on the choice of extending the partial order to a total order. Throughout this section, unless explictly mentioned, we assume a total order.  

If a filtration is a total ordering, there is a unique partition of simplices into positive and negative simplices, where
	\begin{itemize}
		\item Positive simplices are those whose insertion generates a new homology class;
		\item Negative simplices are those whose insertion bounds an exiting homology class.
	\end{itemize}
Given a total ordering, there is a unique insertion order. By \cite{delfinado1993incremental}, the insertion of a simplex either creates a new class or bounds an existing one. The result follows. 
We remark that this is standard terminology within the literature for algorithms for computing persistent homology, e.g. \cite{edelsbrunner2000topological,zomorodian2005computing,cohen2006vines,EdeHar2010}. 

To show the remainder of the proposition, we outline the persistence algorithm. In \cite{zomorodian2005computing}, an algorithm for computing the summand decomposition is given in terms of reducing the boundary matrix into a specific reduced form -- called the column echelon form. Summarizing the algorithm, it arranges the columns (left-to-right) and rows (top-to-bottom) with respect to the order of filtration, so that there the restriction to a submatrix represents the boundary matrix of the complex at the corresponding step of the filtration. The matrix is then reduced left to right, where in each step, if the column's lowest non-zero entry cannot be zeroed out with the reduced previous columns or the column is zero, the algorithm moves to the next column.  After the matrix is reduced, the intervals can be read from this reduced matrix. Consider a column which corresponds to the simplex $\tau$. If it is non-zero, the row of the lowest non-zero entry in the column is called the \emph{pivot}. If $\sigma$ corresponds to the row of the pivot, we say that there is a pairing $(\sigma,\tau)$ and there is a corresponding finite interval $[f(\sigma),f(\tau))$. If it the column is zero and does not appear in an finite interval are unpaired positive simplices and so correspond to infinite intervals $[f(\tau),\infty)$.

Observe that the output of the algorithm depends only on the order in which the the simplices are processed, i.e. the ordering of the columns and rows of the boundary matrix. We conclude given a total ordering, the pairings are uniquely determined.

\end{document}